\documentclass{amsart}

\usepackage{amsmath}
\usepackage{amsfonts}
\usepackage{amssymb}
\usepackage{color}
\usepackage{euscript}
\usepackage{mathrsfs}

\usepackage{hyperref}

\newtheorem{theorem}{Theorem}
\newtheorem{remark}[theorem]{Remark}
\newtheorem{lemma}[theorem]{Lemma}

\newtheorem{corollary}[theorem]{Corollary}
\newtheorem{definition}[theorem]{Definition}

\newtheorem{prop}[theorem]{Proposition}

\newtheorem{notation}[theorem]{Notation}
\numberwithin{equation}{section}
\numberwithin{theorem}{section}

\newcommand{\norm}[1]{\| #1 \|}

\newcommand{\inprod}[1]{\ensuremath{\langle #1\rangle}}



\title[Smoothness of heat kernel measures on infinite Heisenberg groups]
{Smoothness of heat kernel measures on infinite-dimensional
Heisenberg-like groups}
\author[Dobbs]{Daniel Dobbs}
\address{Department of Mathematics\\
University of Virginia \\
Charlottesville, VA 22903 USA} \email{dwd2r@virginia.edu}
\author[Melcher]{Tai Melcher{$^*$}}
\thanks{\footnotemark {$^*$} This research was supported in part by NSF
Grant DMS-0907293.}
\address{Department of Mathematics\\
University of Virginia\\ Charlottesville, VA 22903 USA}
\email{melcher@virginia.edu}

\keywords{Heisenberg group, heat kernel measures, smooth measures, integration
by parts}
\subjclass[2010]{Primary  58J65; 
Secondary 35B65, 
35R15} 

\begin{document}

\begin{abstract}
We study measures associated to Brownian motions on infinite-dimensional
Heisenberg-like groups. In particular, we prove that the associated
path space measure and heat kernel measure satisfy a strong definition
of smoothness.  
\end{abstract}

\maketitle
\tableofcontents

\section{Introduction}

Recall that a measure $\mu$ on $\mathbb{R}^n$ is {\it smooth} if
$\mu$ is absolutely continuous with respect to Lebesgue measure and
the associated density is a smooth function on $\mathbb{R}^n$.  
If one wishes
to generalize this notion of smoothness of measure to an infinite-dimensional
space, one immediately encounters complications due to the lack of an infinite-dimensional Lebesgue
measure.  Thus, we consider the following more intrinsic
definition of smoothness for a measure on $\mathbb{R}^n$:  
for any multi-index
$\alpha=(\alpha_1,\ldots,\alpha_n)\in\{0,1,2,\ldots\}^n$,
there exists a function $z_\alpha\in C^\infty(\mathbb{R}^n)\cap
L^{\infty-}(\mu)$ such that
\[ \int_{\mathbb{R}^n} \partial^\alpha f\,d\mu 
	=\int_{\mathbb{R}^n} fz_\alpha\,d\mu, \quad \text{ for all } f\in
		C_c^\infty(\mathbb{R}^n), \]
where $L^{\infty-}:=\cap_{p\ge1} L^p$ and $\partial^\alpha=\prod_{i=1}^n \partial_i^{\alpha_i}$.
This definition of smoothness is in fact equivalent to our
first understanding (see for example \cite{Driver2003}), 
and it is obviously better suited to adapt to
infinite dimensions and the absence of a canonical reference measure.

In the present paper we adapt the above definition to give a direct proof of the
smoothness of elliptic heat kernel measures on infinite-dimensional
Heisenberg-like groups.  Typically, it is not possible to verify that a measure
on an infinite-dimensional space is smooth in this way and much
weaker interpretations must be made; see for example
\cite{BaudoinTeichmann2005,Malliavin1990a,MattinglyPardoux2006}.

Let $G$ be an infinite-dimensional
Heisenberg-like group, $\mathfrak{g}_{CM}$ be its Cameron-Martin Lie
subalgebra, and $\{\xi_t\}_{t\ge0}$ be a Brownian motion
on $G$ (see Section \ref{s.setup} for definitions). Then we have the following theorem.

\begin{theorem} 
\label{t.thm2}
Fix $T>0$, and let $m\in\mathbb{N}$ and $h_1,\ldots,h_m\in
\mathfrak{g}_{CM}$.  Then there exist $\tilde{z},\hat{z}\in L^{\infty-}$
depending on $h_1,\ldots,h_m$ such that, for any suitably nice function $f$ on $G$,
\[ \mathbb{E}\left[(\tilde{h}_1\cdots\tilde{h}_m f)(\xi_T)\right]
	= \mathbb{E}[f(\xi_T)\tilde{z}]
\quad \text{ and } \quad
\mathbb{E}\left[(\hat{h}_1\cdots\hat{h}_m f)(\xi_T)\right]
	= \mathbb{E}[f(\xi_T)\hat{z}], \]
where $\tilde{h}$ and $\hat{h}$ are the left and right invariant vector
fields, respectively, associated to $h\in\mathfrak{g}_{CM}$.
\end{theorem}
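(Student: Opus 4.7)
The plan is to follow the Malliavin-calculus strategy based on Cameron--Martin perturbations of the driving Brownian motion, combined with a Girsanov change of measure. Recall that $\xi_t$ is built from a Brownian motion $\{B_t\}_{t\ge 0}$ on $\mathfrak{g}_{CM}$ through a Stratonovich SDE involving (left) translation on $G$. For any Cameron--Martin path $k:[0,T]\to\mathfrak{g}_{CM}$ with $k(0)=0$ and $k'\in L^2([0,T];\mathfrak{g}_{CM})$, the shift $B\mapsto B+\epsilon k$ is quasi-invariant with explicit density $Z_\epsilon = \exp\bigl(\epsilon J(k) - \tfrac{1}{2}\epsilon^2\|k'\|_{L^2}^2\bigr)$, where $J(k)=\int_0^T \langle k_s', dB_s\rangle$. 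The entire argument hinges on converting $\epsilon$-derivatives of $f(\xi^\epsilon_T)$ at $\epsilon=0$ into expectations against these densities and their iterated derivatives.

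For $m=1$ and the left-invariant case, I would pick a Cameron--Martin path $k$ built from $h$ (for instance $k(s)=\eta(s)h$ for a suitable scalar profile $\eta$, or a terminal-shift $k_T=h$) so that left-invariance of the defining SDE gives $\frac{d}{d\epsilon}\big|_{\epsilon=0} f(\xi^\epsilon_T) = (\tilde{h}f)(\xi_T)$. Differentiating the Girsanov identity $\mathbb{E}[f(\xi^\epsilon_T)] = \mathbb{E}[f(\xi_T)Z_\epsilon]$ at $\epsilon=0$ then yields $\mathbb{E}[(\tilde{h}f)(\xi_T)] = \mathbb{E}[f(\xi_T)J(k)]$, so $\tilde{z}=J(k)$ in the base case. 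The right-invariant analogue requires an adjoint-twisted path of the shape $k_s = \eta(s)\,\mathrm{Ad}(\xi_s^{-1})h$, because right translation does not commute with the forward SDE directly; crucially, the step-two nilpotent structure of Heisenberg-like groups keeps $\mathrm{Ad}(\xi_s^{-1})h$ explicit (it involves a single Lie bracket with $\log\xi_s$) and, by definition of $\mathfrak{g}_{CM}$, within the Cameron--Martin subalgebra.

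For general $m$ the scheme is iteration: apply the single-derivative formula with $h_1$ to $\tilde{h}_2\cdots\tilde{h}_m f$, then perturb again in the $h_2$-direction and commute the new derivative past the existing Wiener-integral factor, picking up correction terms from differentiating $J(k_1)$ along the $h_2$-perturbation. The output $\tilde{z}$ (respectively $\hat{z}$) is a polynomial in the Wiener integrals associated to the perturbations for $h_1,\ldots,h_m$, with coefficients that in the right-invariant case are themselves polynomials in the central (bracket) part of $\xi$. Care is needed in carrying out these manipulations first on finite-dimensional projections $G_P$ of $G$ and passing to the limit, with bounds uniform in $P$.

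The main obstacle will be verifying the $L^{\infty-}$ integrability of $\tilde{z}$ and $\hat{z}$. For $\tilde{z}$ this is reasonably clean: the density is a polynomial in centered Gaussians $J(k_i)$, so all $L^p$ bounds follow from Wiener-chaos hypercontractivity. For $\hat{z}$ the presence of the $\mathrm{Ad}$-twist introduces factors involving the Brownian motion on the central part of $G$, and one must produce Gaussian-type tail estimates that survive the projective limit; the step-two nilpotency is what makes this tractable, since only a single Lie bracket appears. A secondary technical point is to justify differentiation under the expectation and to pin down the class of \emph{suitably nice} test functions $f$ on $G$ for which the Girsanov computation and its iterates are rigorously valid, presumably smooth cylinder functions on $G$ so that Itô's formula and the chain rule apply without subtlety.
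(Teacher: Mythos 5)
Your overall strategy---perturb the driving noise, apply Girsanov/Cameron--Martin, differentiate the density at $\varepsilon=0$, and iterate for higher $m$---is indeed the engine behind one half of the theorem, but the proposal has two concrete gaps, and they sit exactly at the point where the paper has to do something nontrivial. First, a deterministic Cameron--Martin shift $(B,B^0)\mapsto(B,B^0)+\epsilon k$ with $k(s)=\eta(s)h$ does \emph{not} give $\frac{d}{d\epsilon}\big|_0 f(\xi^\epsilon_T)=(\tilde{h}f)(\xi_T)$. Writing $h=(h_W,h_C)$ and $k=(A,a)$, a direct computation from \eqref{e.bm} gives
\[
\frac{d}{d\epsilon}\bigg|_0 \xi^\epsilon_T
= \Big(A_T,\; a_T+\textstyle\int_0^T\omega(A_s,dB_s)+\tfrac12\omega(B_T,A_T)\Big)
= \ell_{\xi_T*}\Big(A_T,\,a_T+\textstyle\int_0^T\omega(A_s,dB_s)\Big),
\]
i.e.\ a left-invariant derivative in a \emph{random} direction; the extra central term $\int_0^T\omega(A_s,dB_s)$ cannot be removed by any choice of deterministic profile $\eta$, so you would still owe a further integration by parts in the central directions to isolate $\tilde{h}$. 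The shift that does produce a clean group derivative at the endpoint is the one induced by left-translating the whole path, $\xi\mapsto\varepsilon\mathbf{h}\cdot\xi$ (an \emph{adapted} shift whose central part $u_{\varepsilon\mathbf{A}}$ of \eqref{e.uA} is still absolutely continuous), and that yields the \emph{right}-invariant derivative $\hat{h}$, not $\tilde{h}$. So the case you call ``reasonably clean'' is the $\hat{h}$ case, and it is Theorem \ref{t.qi}/\ref{t.pathIBP}/\ref{t.rHeatKernelIBP}.

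Second, your proposed repair for the other family, $k_s=\eta(s)\,\mathrm{Ad}(\xi_s^{-1})h$, is not a Cameron--Martin direction: here $\mathrm{Ad}(\xi_s^{-1})h=(h_W,\,h_C-\omega(B_s,h_W))$, so $dk_s$ contains the term $-\eta(s)\omega(dB_s,h_W)$ and $k$ is not absolutely continuous; the Cameron--Martin/Girsanov theorem is simply unavailable for this shift. This is precisely the obstruction recorded in Remark \ref{r.fernique}'s neighbour, Remark 3.5 (and Remark 5.3 of \cite{DriverGordina2008}), for right translations. The paper's actual route to the $\tilde{h}$ formula is entirely different and is the missing idea in your proposal: it uses the inversion invariance of the heat kernel measure, $\mathbb{E}[f(\xi_T)]=\mathbb{E}[f(\xi_T^{-1})]$ from \eqref{e.inv}, together with the pointwise identity $(\tilde{h}f)(g)=-(\hat{h}u)(g^{-1})$ for $u(g):=f(g^{-1})$, the conditional density $\hat{\Psi}_{h_1,\ldots,h_m}(\xi_T)=\mathbb{E}[\Psi_{h_1,\ldots,h_m}\mid\sigma(\xi_T)]$, and induction on $m$ (Corollary \ref{c.lHeatKernelIBP}). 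Without this step (or a genuine substitute), the left-invariant half of the theorem remains unproved. Two smaller points: the paper works directly on the abstract Wiener space (no finite-dimensional projections are needed), and the $L^p$ control is not pure Wiener-chaos hypercontractivity---$J_{\varepsilon\mathbf{h}}$ is the exponential of a second-chaos-type integral and lies in $L^p$ only for $\varepsilon$ small (Proposition \ref{p.Jhp}), which is why the argument differentiates at $\varepsilon=0$ with uniform bounds on a small interval rather than globally.
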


This result is proved by first establishing smoothness results for the induced measure
on the associated path space.  In particular, let $\mathcal{W}_T(G)$ denote
continuous path space on $G$ and $\mathcal{H}_T(\mathfrak{g}_{CM})$
denote the space of absolutely continuous paths on $\mathfrak{g}_{CM}$
with finite energy (see Notation \ref{n.path}).  Then we prove the following theorem.

\begin{theorem} 
\label{t.thm1}
Let $m\in\mathbb{N}$ and $\mathbf{h}_1,\ldots,\mathbf{h}_m\in
\mathcal{H}_T(\mathfrak{g}_{CM})$.  Then there exists $\hat{Z}\in L^{\infty-}$ depending on
$\mathbf{h}_1,\ldots,\mathbf{h}_m$ 
such that, for any suitably nice function $F$ on $\mathcal{W}_T(G)$, 
\[ \mathbb{E}\left[(\hat{\mathbf{h}}_1\cdots\hat{\mathbf{h}}_m F)(\xi)\right]
	= \mathbb{E}[F(\xi)\hat{Z}], \]
where $\hat{\mathbf{h}}$ is the right invariant vector field
associated to $\mathbf{h}\in\mathcal{H}_T(\mathfrak{g}_{CM})$.
\end{theorem}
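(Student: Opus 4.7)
The plan is to prove Theorem \ref{t.thm1} by induction on $m$, with the base case $m=1$ being a Cameron--Martin--type integration by parts for a single right-invariant vector field.

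For the base case, the flow generated by $\hat{\mathbf{h}}$ on $\mathcal{W}_T(G)$ is given by the left translation $\xi \mapsto e^{\varepsilon \mathbf{h}}\cdot \xi$, where $e^{\varepsilon\mathbf{h}}$ denotes the path $t \mapsto \exp(\varepsilon\mathbf{h}(t))$. I would show that the law of $\xi$ is quasi-invariant under this flow by a Girsanov-type argument, exploiting the explicit representation of Brownian motion on a Heisenberg-like group as an $\mathfrak{g}_{CM}$-valued Brownian motion together with an ``area'' correction determined by the bracket. Differentiating the resulting Radon--Nikodym density at $\varepsilon=0$ produces a divergence $\hat{Z}_{\mathbf{h}_1}$; it will take the schematic form of an Itô integral of $\mathbf{h}'$ against the horizontal Brownian motion plus bracket-type correction terms, both of which are in $L^{\infty-}$ by Burkholder--Davis--Gundy.

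For the induction step, apply the base case with $G := \hat{\mathbf{h}}_2 \cdots \hat{\mathbf{h}}_m F$ to write
\[ \mathbb{E}\bigl[(\hat{\mathbf{h}}_1 \cdots \hat{\mathbf{h}}_m F)(\xi)\bigr]
   = \mathbb{E}\bigl[(\hat{\mathbf{h}}_2 \cdots \hat{\mathbf{h}}_m F)(\xi)\,\hat{Z}_{\mathbf{h}_1}\bigr]. \]
Then use the Leibniz rule $\hat{\mathbf{h}}_2(G'\hat{Z}_{\mathbf{h}_1})=(\hat{\mathbf{h}}_2 G')\hat{Z}_{\mathbf{h}_1}+G'\,(\hat{\mathbf{h}}_2 \hat{Z}_{\mathbf{h}_1})$ to absorb $\hat{\mathbf{h}}_2$ into the product at the cost of a derivative of $\hat{Z}_{\mathbf{h}_1}$, then apply the base case again to the first term. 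Iterating this bookkeeping gives a final expression $\mathbb{E}[F(\xi)\hat{Z}]$ where $\hat{Z}$ is a polynomial (in fact, a sum over partitions of $\{1,\dots,m\}$) of iterated derivatives of the form $\hat{\mathbf{h}}_{i_1}\cdots\hat{\mathbf{h}}_{i_k}\hat{Z}_{\mathbf{h}_{j}}$. Since $L^{\infty-}$ is stable under finite sums and products by H\"older's inequality, it suffices to show that each such iterated derivative is in $L^{\infty-}$.

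The main obstacle is precisely this last point: computing and controlling the iterated derivatives $\hat{\mathbf{h}}_{i_1}\cdots\hat{\mathbf{h}}_{i_k}\hat{Z}_{\mathbf{h}_j}$. Because the right-invariant vector fields do not commute, one must carefully track the commutator contributions from the Heisenberg-like structure; the 2-step nilpotence of $\mathfrak{g}_{CM}$ is crucial here, since it forces iterated brackets beyond depth two to vanish and keeps the resulting stochastic integrals explicit. To justify the differentiations rigorously on the infinite-dimensional path space, I would approximate $\mathfrak{g}_{CM}$ by an increasing sequence of finite-dimensional subspaces, carry out the calculation there, and pass to the limit using uniform-in-dimension $L^p$ bounds on the Itô and bracket terms involved.
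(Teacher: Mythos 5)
Your strategy is in substance the paper's: quasi-invariance of $\mathrm{Law}(\xi)$ under left translation by Cameron--Martin paths (Theorem \ref{t.qi}, proved by applying the classical Cameron--Martin theorem twice, once in $W$ and once in $\mathbf{C}$), differentiation of the Radon--Nikodym density $J_{\varepsilon\mathbf{h}}$ at $\varepsilon=0$ to produce the divergence, induction on $m$, and $L^{\infty-}$ control of all weights via Fernique and Burkholder--Davis--Gundy, with the nilpotence of the bracket guaranteeing that iterated derivatives of the divergence terminate (after four steps the weight is deterministic, whence the partitions into blocks of size at most $4$).

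There is, however, one concrete gap in your induction step as written. You propose to use the Leibniz rule and then ``apply the base case again'' to $\mathbb{E}[\hat{\mathbf{h}}_2(G'\hat{Z}_{\mathbf{h}_1})]$. But $\hat{Z}_{\mathbf{h}_1}$ is a stochastic integral in $(B,B^0)$, not a smooth, polynomially bounded function on $\mathcal{W}_T(G)$, so the base case does not apply to the product $G'\hat{Z}_{\mathbf{h}_1}$; moreover the expression $\hat{\mathbf{h}}_2\hat{Z}_{\mathbf{h}_1}$ has no a priori meaning for such a functional. The paper avoids this by building an arbitrary measurable weight $Z(B,B^0)$ into the quasi-invariance identity from the outset: in the induction one differentiates $\mathbb{E}[F(\varepsilon\mathbf{h}_{m+1}\cdot\xi)\,\Phi]=\mathbb{E}[F(\xi)\,\Phi^{\varepsilon\mathbf{h}_{m+1}}J_{\varepsilon\mathbf{h}_{m+1}}]$ in $\varepsilon$, and the ``derivative of the weight'' arises as the $\varepsilon$-derivative of the explicitly translated weight $\Phi^{\varepsilon\mathbf{h}_{m+1}}$, which Lemma \ref{l.zDerivs} shows is a polynomial in $\varepsilon$ with coefficients in $L^{\infty-}$; no vector field ever acts on the weight directly, and the dominated-convergence justifications reduce to the $L^p$ bounds of Propositions \ref{p.Jhp}--\ref{p.Zderp}. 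Your plan is repairable along exactly these lines, but the weighted quasi-invariance is the missing ingredient, not an optional refinement. Finally, the finite-dimensional approximation you invoke to justify the differentiations is unnecessary here: the Cameron--Martin theorem on the abstract Wiener space together with the explicit quartic dependence of $\log J_{\varepsilon\mathbf{h}}$ on $\varepsilon$ lets one work directly in infinite dimensions.
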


Theorem \ref{t.thm1} is stated more precisely and proved in Theorem \ref{t.pathIBP}; 
Theorem \ref{t.thm2} is the content of Theorem
\ref{t.rHeatKernelIBP} and Corollary \ref{c.lHeatKernelIBP}.
Note that these theorems give a strong satisfaction of smoothness for measures in
infinite dimensions.  

The organization of the paper is as follows. Section \ref{s.setup} recalls
the definitions of infinite-dimensional Heisenberg-like groups and Brownian
motions on these groups, first studied in \cite{DriverGordina2008}.  In
Section \ref{s.ps}, we recall the quasi-invariance and first-order integration
by parts results proved in \cite{DriverGordina2008} for the
path space measure, and, building on these results, give the
integration by parts formulae that prove Theorem \ref{t.thm1}.  In Section
\ref{s.gp}, we show how these path space results immediately give integration by
parts formulae for heat kernel measures on the group.

Finally, let us here mention some references to other quasi-invariance and
integration by parts results for measures in infinite-dimensional
curved settings; see
\cite{Airault2006,Albeverio1997,Bell2006,Driver1997,Fang1999,Hsu2009} and
their references.

{\it Acknowledgement.} The authors would like to thank Bruce Driver for
suggesting this problem.  We would also like to thank the anonymous referee
for thoughtful recommendations to improve the readability of this
paper.

\section{Brownian motion on infinite-dimensional Heisenberg-like groups}
\label{s.setup}

In this section, we recall the definitions of infinite-dimensional
Heisenberg-like groups and Brownian motion on these spaces.  For
more details on this construction, see Sections 2 and 4 of
\cite{DriverGordina2008}.  One may also consult this reference for
motivating examples, including the
finite-dimensional Heisenberg groups as well as the Heisenberg group
of a symplectic vector space.

Let $(W,H,\mu)$ denote an abstract Wiener space; that is, $W$ is a real
separable Banach space equipped with Gaussian measure $\mu$ and $H$ is the
associated Cameron-Martin subspace. Let $\mathbf{C}$
be a real vector space with inner product
$\langle\cdot,\cdot\rangle_\mathbf{C}$ and
$\mathrm{dim}(\mathbf{C})=:N<\infty$.  Let $\omega:W\times
W\rightarrow\mathbf{C}$  be a continuous skew-symmetric bilinear
form on $W$. 

\begin{definition}
Let $\mathfrak{g}$ denote $W\times\mathbf{C}$ when thought of as a Lie algebra
with the Lie bracket given by
\begin{equation}
\label{e.3.5}
[(X_1,V_1), (X_2,V_2)] := (0, \omega(X_1,X_2)).
\end{equation}
We may also equip $W\times\mathbf{C}$ with the group multiplication
given by
\begin{equation}
\label{e.3.2}
(w_1,c_1)\cdot(w_2,c_2) = \left( w_1 + w_2, c_1 + c_2 +
    \frac{1}{2}\omega(w_1,w_2)\right).
\end{equation}
We will denote $W\times\mathbf{C}$ by $G$ when thought of as a group, and 
we will call $G$ constructed in this way a {\em Heisenberg-like group}.
\end{definition}
It is easy to verify that, given this bracket and multiplication,
$\mathfrak{g}$ is indeed a Lie algebra and $G$ is a group
with $g^{-1}=-g$ and identity $e=(0,0)$.

\begin{notation}
Let $\mathfrak{g}_{CM}$ denote $H\times\mathbf{C}$ when thought of as a Lie
subalgebra of $\mathfrak{g}$, and we will refer to $\mathfrak{g}_{CM}$ as the
{\em Cameron-Martin subalgebra} of $\mathfrak{g}$.
\end{notation}

The space $\mathfrak{g}=G=W\times\mathbf{C}$ is a Banach space with the norm
\[ \|(w,c)\|_{\mathfrak{g}} := \|w\|_W + \|c\|_\mathbf{C}, \]
and $\mathfrak{g}_{CM}=H\times\mathbf{C}$ is a Hilbert space with respect to the inner
product
\[ \langle (A,a),(B,b)\rangle_{\mathfrak{g}_{CM}}
	:= \langle A,B\rangle_H + \langle a,b\rangle_\mathbf{C}. \]
The associated Hilbertian norm on $\mathfrak{g}_{CM}$ is given by
\[ \|(A,a)\|_{\mathfrak{g}_{CM}} := \sqrt{\|A\|_H^2 + \|a\|_\mathbf{C}^2}. \]

Let $i:H\rightarrow W$ denote the inclusion map, $i^*:W^*\rightarrow
H^*$ denote its transpose, and $H_*:=\{h\in H:
\langle\cdot,h\rangle_H\in\mathrm{Range}(i^*)\}$.  Let
$\{B_t,B^0_t\}_{t\ge0}$ be a Brownian motion on $\mathfrak{g}$ with
variance determined by
\[
\mathbb{E}\left[\langle (B_s,B^0_s),(A,a)\rangle_{\mathfrak{g}_{CM}} \langle
	(B_t,B^0_t),(C,c)\rangle_{\mathfrak{g}_{CM}}\right]
    = \langle (A,a),(C,c) \rangle_{\mathfrak{g}_{CM}} \min(s,t),
\]
for all $s,t\ge0$, $A,C\in H_*$, and $a,c\in\mathbf{C}$. 

\begin{definition}
\label{d.bm}
The continuous $G$-valued process given by
\begin{equation}
\label{e.bm} 
\xi_t 
	= \left( B_t, B^0_t + \frac{1}{2}\int_0^t \omega(B_s,dB_s)\right)
\end{equation}
is a {\em Brownian motion} on $G$.
For $T>0$, let $\nu_T=\mathrm{Law}(\xi_{T})$ denote the {\em heat kernel measure
at time $T$} on $G$.
\end{definition}

Proposition 4.1 of \cite{DriverGordina2008} gives
details on how the above stochastic integral is defined, and 
more generally that reference proves many properties of the process $\xi_t$ and its
distribution.  In particular, in Corollary 4.9 of that reference it is proved that $\nu_T$ is
invariant under the inversion map $g\mapsto g^{-1}$; that is,
for any $T>0$, 
\begin{equation} 
\label{e.inv}
\mathbb{E}[f(\xi_T)] 
	= \int_G f(g)\,d\nu_T(g) = \int_G f(g^{-1})\,d\nu_T(g)
	= \mathbb{E}[f(\xi_T^{-1})].
\end{equation}

\section{The path space measure}
\label{s.ps}

In this section, we prove that 
$\nu=\mathrm{Law}(\xi)$ satisfies its own strong smoothness properties.

\begin{notation}
\label{n.path}
Fix $T>0$. For a Banach space $X$, let 
\[ \mathcal{W}_T(X) := \{x:[0,T]\rightarrow X: x \text{ continuous and } x(0)=0 \}\]
equipped with the sup norm
topology, and, for a Hilbert space $K$, let $\mathcal{H}_T(K)$ denote the
absolutely continuous paths in $\mathcal{W}_T(K)$ with finite energy.
In particular, for $X=G$ 
\[ \|\mathbf{g}\|_{\mathcal{W}_T(G)} 
	:= \sup_{0\le t\le T} \|\mathbf{g}(t)\|_\mathfrak{g}
	= \sup_{0\le t\le T} \left(\|\mathbf{w}(t)\|_W +
		\|\mathbf{c}(t)\|_\mathbf{C}\right) \]
for all $\mathbf{g}=(\mathbf{w},\mathbf{c})\in \mathcal{W}_T(G)$,
and for $K=\mathfrak{g}_{CM}$ 
\[ \|\mathbf{h}\|_{\mathcal{H}_T(\mathfrak{g}_{CM})}^2 
	:= \int_0^T \|\dot{\mathbf{h}}(t)\|_{\mathfrak{g}_{CM}}^2\,dt
	= \int_0^T \left(\|\dot{\mathbf{A}}(t)\|_{H}^2 
		+ \|\dot{\mathbf{a}}(t)\|_{\mathbf{C}}^2\right)\,dt \] 
for all $\mathbf{h}=(\mathbf{A},\mathbf{a})\in
\mathcal{H}_T(\mathfrak{g}_{CM})$.
\end{notation}

\begin{remark}
\label{r.fernique}
Recall that, for $\{B_t\}_{t\ge0}$ Brownian motion on $W$, $\mathrm{Law}(B)$ is a
Gaussian measure on the separable Banach space $\mathcal{W}_T(W)$.  
Thus, by
Fernique's theorem (see for example Theorem 3.1 of
\cite{Kuo1975}), there exists $\delta_0>0$ such that for all
$\delta<\delta_0$
\[ \mathbb{E}\left[\exp(\delta\|B\|_{\mathcal{W}_T(W)}^2)\right]<\infty. \]
Additionally, in Proposition 4.1 of \cite{DriverGordina2008}, it is proved that for any
$p\in[1,\infty)$
\[ \mathbb{E}\left\|\int_0^\cdot
	\omega(B_s,dB_s) \right\|_{\mathcal{W}_T(\mathbf{C})}^p<\infty. 
\]
\end{remark}

The following theorem is a slight generalization of Theorem 5.2 in
\cite{DriverGordina2008}, and the proof is analogous.
\begin{theorem}
\label{t.qi}
Let $\mathbf{h}=(\mathbf{A},\mathbf{a})\in\mathcal{H}_T(\mathfrak{g}_{CM})$.  If
$F,Z:\mathcal{W}_T(G)\rightarrow[0,\infty]$ are measurable functions, then
\begin{equation}
\label{e.qi} 
\mathbb{E}[F(\mathbf{h}\cdot \xi)Z(B,B^0)]
    = \mathbb{E}[F(\xi)Z(B-\mathbf{A},B^0-\mathbf{a}-u_\mathbf{A})J_\mathbf{h}], 
\end{equation}
where 
\begin{equation}
\label{e.uA} 
u_\mathbf{A}(t) 
	:= \frac{1}{2}\int_0^t\omega(\mathbf{A}(s)-2B_s,\dot{\mathbf{A}}(s))\,ds
	\in\mathcal{H}_T(\mathbf{C}) 
\end{equation}
and $J_\mathbf{h} = J_\mathbf{h}(B,B^0)$ is given by
\begin{multline}
\label{e.Jh}
J_\mathbf{h}
    := \exp\bigg\{ \int_0^T \langle \dot{\mathbf{A}}(t),dB_t\rangle_H
        + \left\langle \dot{\mathbf{a}}(t) +
            \frac{1}{2}\omega(\mathbf{A}(t)-2B_t,
		\dot{\mathbf{A}}(t)),dB_t^0\right\rangle_\mathbf{C} \\
    - \frac{1}{2}\int_0^T \left(\|\dot{\mathbf{A}}(t)\|_H^2
        + \left\|\dot{\mathbf{a}}(t) 
	+ \frac{1}{2}\omega(\mathbf{A}(t)-2B_t,\dot{\mathbf{A}}(t))
            \right\|_\mathbf{C}^2 \right)\,dt\bigg\}.
\end{multline}
Moreover, equation (\ref{e.qi}) holds for all measurable
$F,Z:\mathcal{W}_T(G)\rightarrow\mathbb{R}$ such that
\[ \mathbb{E}|F(\mathbf{h}\cdot \xi)Z(B,B^0)|
	= \mathbb{E}|F(\xi)Z(B-\mathbf{A},B^0-\mathbf{a}-u_\mathbf{A})J_\mathbf{h}|
	<\infty. \]
\end{theorem}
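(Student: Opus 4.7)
The plan is to employ a Girsanov/Cameron-Martin change-of-measure argument, essentially following the proof of Theorem~5.2 in \cite{DriverGordina2008} with the additional factor $Z$ carried along. First I would define a probability measure $\mathbb{Q}$ via $d\mathbb{Q}/d\mathbb{P}=J_\mathbf{h}$. To justify $\mathbb{E}[J_\mathbf{h}]=1$, one verifies a suitable integrability criterion (such as Novikov's): the first exponent $\int_0^T \langle \dot{\mathbf{A}}(t), dB_t\rangle_H$ is standard since $\dot{\mathbf{A}}\in L^2$, while the second integrand $\dot{\mathbf{a}}(t)+\frac{1}{2}\omega(\mathbf{A}(t)-2B_t, \dot{\mathbf{A}}(t))$ depends linearly on $B$, so the required exponential integrability reduces to that of a quadratic functional of $B$, which is exactly Fernique's theorem (Remark~\ref{r.fernique}).

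The crucial observation is that, by the very definition \eqref{e.uA}, the $B^0$-drift in the exponent of $J_\mathbf{h}$ is precisely $\frac{d}{dt}[\mathbf{a}(t)+u_\mathbf{A}(t)]$. Girsanov's theorem then identifies
\[
    \tilde{B}_t := B_t-\mathbf{A}(t), \qquad \tilde{B}^0_t := B^0_t-\mathbf{a}(t)-u_\mathbf{A}(t)
\]
as a Brownian motion on $\mathfrak{g}$ under $\mathbb{Q}$ whose covariance matches that of $(B, B^0)$ under $\mathbb{P}$. After this substitution the arguments of $Z$ on the right-hand side of \eqref{e.qi} reduce to $Z(\tilde{B}, \tilde{B}^0)$.

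The remaining step is the pathwise identity
\[
    \xi = \mathbf{h}\cdot \tilde{\xi}, \qquad \tilde{\xi}_t := \bigl(\tilde{B}_t,\; \tilde{B}^0_t+\tfrac{1}{2}\!\int_0^t \omega(\tilde{B}_s, d\tilde{B}_s)\bigr).
\]
I would verify this by expanding both sides using \eqref{e.3.2} and \eqref{e.bm}, then invoking the integration-by-parts relation $\omega(\mathbf{A}(t), \tilde{B}_t)=\int_0^t \omega(\dot{\mathbf{A}}(s), \tilde{B}_s)\,ds+\int_0^t \omega(\mathbf{A}(s), d\tilde{B}_s)$ together with the bilinearity and skew-symmetry of $\omega$; the remaining terms collapse precisely via the definition of $u_\mathbf{A}$. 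Assembling these ingredients yields
\[
    \mathbb{E}[F(\xi)Z(B-\mathbf{A}, B^0-\mathbf{a}-u_\mathbf{A})J_\mathbf{h}] = \mathbb{E}_\mathbb{Q}[F(\mathbf{h}\cdot\tilde{\xi})Z(\tilde{B},\tilde{B}^0)] = \mathbb{E}[F(\mathbf{h}\cdot\xi)Z(B,B^0)],
\]
the last equality holding because $(\tilde{B},\tilde{B}^0)$ under $\mathbb{Q}$ is distributionally identical to $(B,B^0)$ under $\mathbb{P}$. The extension to general measurable integrands satisfying the stated integrability hypothesis then follows by decomposing $F$ and $Z$ into positive and negative parts.

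I expect the main obstacle to be the careful justification of the Girsanov step in this infinite-dimensional Banach-valued setting: making the stochastic integrals rigorous, and in particular promoting $J_\mathbf{h}$ from a local martingale to a true martingale. The $B$-dependent drift in the $B^0$ component forces one to control the exponential of a quadratic functional of Brownian motion, which is precisely what Fernique's theorem provides. Once the integrability machinery developed in \cite{DriverGordina2008} for Theorem~5.2 is in place, the only genuinely new ingredient is carrying the factor $Z$ through the change of variables, and this is automatic given the explicit form of the Girsanov shift.
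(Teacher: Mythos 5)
Your Girsanov argument runs the change of measure in the opposite direction from the paper: you start from the right-hand side of \eqref{e.qi}, posit $d\mathbb{Q}=J_\mathbf{h}\,d\mathbb{P}$, and identify $(\tilde{B},\tilde{B}^0)$ as a $\mathbb{Q}$-Brownian motion, whereas the paper starts from the left-hand side and performs two successive applications of the classical (deterministic-shift) Cameron--Martin theorem: first the translation $(B,B^0)\mapsto(B-\mathbf{A},B^0-\mathbf{a})$, and then---after freezing $B$ by Fubini, so that $u_\mathbf{A}$ becomes a fixed element of $\mathcal{H}_T(\mathbf{C})$---the translation $B^0\mapsto B^0-u_\mathbf{A}$; the density $J_\mathbf{h}$ then emerges as the product $\bar{J}_\mathbf{h}(B,B^0-u_\mathbf{A})\bar{J}_{(0,u_\mathbf{A})}$ of the two Jacobians rather than being written down in advance. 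Both routes are legitimate, and your pathwise identity $\xi=\mathbf{h}\cdot\tilde{\xi}$ is correct---it is exactly the algebra the paper performs when it rewrites the argument of $F$ in terms of $u_\mathbf{A}$. The paper's ordering has the advantage that it never needs a Girsanov theorem with random drift: the only random shift, $u_\mathbf{A}$, lives in the finite-dimensional component $\mathbf{C}$ and is deterministic conditionally on $B$, so the classical Cameron--Martin theorem suffices throughout.

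The one step that does not go through as you state it is the claim that Novikov plus Fernique gives $\mathbb{E}[J_\mathbf{h}]=1$ for arbitrary $\mathbf{h}$. The Novikov exponent $\tfrac12\int_0^T\|\dot{\mathbf{a}}(t)+\tfrac12\omega(\mathbf{A}(t)-2B_t,\dot{\mathbf{A}}(t))\|_\mathbf{C}^2\,dt$ grows like $\|\mathbf{h}\|^2_{\mathcal{H}_T(\mathfrak{g}_{CM})}\|B\|^2_{\mathcal{W}_T(W)}$, while Fernique only furnishes $\mathbb{E}[\exp(\delta\|B\|^2_{\mathcal{W}_T(W)})]<\infty$ for $\delta$ below a threshold $\delta_0$; so the naive Novikov check succeeds only when $\|\mathbf{h}\|_{\mathcal{H}_T(\mathfrak{g}_{CM})}$ is small---this is precisely the smallness restriction that appears in Proposition \ref{p.Jhp} for $p>1$, and the theorem is asserted for all $\mathbf{h}$. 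To repair this you could apply Novikov on a sufficiently fine partition of $[0,T]$, so that on each subinterval the coefficient multiplying $\|B\|^2_{\mathcal{W}_T(W)}$ drops below $\delta_0$, or---as the paper does in the $p=1$ case of Proposition \ref{p.Jhp}---condition on $B$ first: given $B$, the $B^0$-exponential is a genuine Gaussian exponential with conditional expectation exactly $1$, and the remaining $B$-exponential has deterministic integrand, so $\mathbb{E}[J_\mathbf{h}]=1$ with no restriction on $\mathbf{h}$. With that point fixed, the remaining ingredients of your argument (adaptedness of the drift, the pathwise identity, and the reduction of general $F,Z$ to positive and negative parts) are sound.
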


\begin{proof}
First combining (\ref{e.3.2}) and (\ref{e.bm}) gives
\begin{multline*}
\mathbb{E}[ F(\mathbf{h}\cdot \xi)Z(B,B^0) ] \\
	=  \mathbb{E}\left[ F\left(B+\mathbf{A},B^0+\mathbf{a}+\frac{1}{2}\int_0^\cdot
		\omega(B_s,dB_s)+\frac{1}{2}\omega(\mathbf{A},B)\right)Z(B,B^0)
\right].
\end{multline*} 
Now translating $(B,B^0)\mapsto (B-\mathbf{A},B^0-\mathbf{a})$ and applying
the standard Cameron-Martin theorem (see for example Theorem 1.2 of Chapter II
of \cite{Kuo1975}) implies that
\begin{align*}
\mathbb{E}&[ F(\mathbf{h}\cdot \xi)Z(B,B^0) ] \\
	&=  \mathbb{E}\bigg[ F\left(B,B^0 + \frac{1}{2}\int_0^\cdot
		\omega(B_s-\mathbf{A}(s),d(B_s-\mathbf{A}(s)))
		+\frac{1}{2}\omega(\mathbf{A},B-\mathbf{A})\right) \\
	&\qquad \times Z(B-\mathbf{A},B^0-\mathbf{a})\bar{J}_\mathbf{h}(B,B_0)\bigg] 
\end{align*} 
where $\bar{J}_\mathbf{h} = \bar{J}_\mathbf{h} (B,B^0)$ is given by
\begin{multline*}
\bar{J}_\mathbf{h} 
	:= \exp\left(\int_0^T \langle\dot{\mathbf{A}}(t),dB_t\rangle_H 
		- \frac{1}{2}\int_0^T \|\dot{\mathbf{A}}(t)\|_H^2\,dt\right) \\
	\times\exp\left(\int_0^T \langle\dot{\mathbf{a}}(t),dB_t^0\rangle_\mathbf{C} 
		- \frac{1}{2}\int_0^T
\|\dot{\mathbf{a}}(t)\|_\mathbf{C}^2\,dt\right).
\end{multline*}
This may be rewritten as
\begin{align*}
\mathbb{E}[ F(\mathbf{h}&\cdot \xi)Z(B,B^0) ] \\
	&= \mathbb{E}\bigg[ F\left(B,B^0 + \frac{1}{2}\int_0^\cdot
		\omega(B_s,dB_s) 
		+ \frac{1}{2}\int_0^\cdot \omega(\mathbf{A}(s)-2B_s,\dot{\mathbf{A}}(s))\ ds\right)  \\
	&\qquad \times
		Z(B-\mathbf{A},B^0-\mathbf{a})\bar{J}_\mathbf{h}(B,B_0)\bigg].
\end{align*} 
Freezing integration over $B$ (that is, using Fubini) and translating again, 
this time $B_0\mapsto B_0-u_\mathbf{A}$ with $u_A$ as defined in (\ref{e.uA}),
we may again apply the Cameron-Martin theorem to get that
\[
\mathbb{E}[F(\mathbf{h}\cdot \xi)Z(B,B^0)]
	=\mathbb{E}\left[ F(\xi)Z(B-\mathbf{A},B^0-\mathbf{a}-u_\mathbf{A}) 
		\bar{J}_\mathbf{h}(B,B^0-u_\mathbf{A})
		\bar{J}_{(0,u_\mathbf{A})}\right].
\]
Now one may simplify to show that
\[  \bar{J}_\mathbf{h}(B,B^0-u_\mathbf{A})\bar{J}_{(0,u_\mathbf{A})} = J_\mathbf{h}, \]
where $J_\mathbf{h}$ is as defined in \eqref{e.Jh}.
\end{proof}

\begin{remark}
If we take $Z\equiv 1$ in the previous theorem, this is the statement
that $\nu=\mathrm{Law}(\xi)$ is quasi-invariant under left translation
by elements of $\mathcal{H}_T(\mathfrak{g}_{CM})$. It is worth
recalling that the above proof fails for right translation, as the
requisite translating element in that case is not absolutely
continuous and thus the Cameron-Martin theorem is no longer available;
see Remark 5.3 of \cite{DriverGordina2008} for details.  
\end{remark}

We now have a few technical estimates and notations that will allow us to
prove the desired integration by parts formulae in Theorem \ref{t.pathIBP}.  
The following result is a restatement of Proposition 5.4 of
\cite{DriverGordina2008}.  We include the proof here for completeness.
\begin{prop} 
\label{p.Jhp}
Let $p\in[1,\infty)$.  Then there exists $\kappa=\kappa(p)>0$ such that, for
all $\mathbf{h}\in \mathcal{H}_T(\mathfrak{g}_{CM})$ such that
$\|\mathbf{h}\|_{\mathcal{H}_T(\mathfrak{g}_{CM})}<\kappa$,
\[ \mathbb{E}[J_\mathbf{h}(B,B^0)^p] <\infty. \]
\end{prop}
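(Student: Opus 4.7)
The plan is to recognize $J_\mathbf{h}$ as (essentially) the exponential of a local martingale and reduce the problem, via a standard Cauchy-Schwarz/Novikov trick, to a single estimate of the form $\mathbb{E}[\exp(\gamma Q_T)] < \infty$, which will then be controlled by Fernique's theorem when $\|\mathbf{h}\|_{\mathcal{H}_T(\mathfrak{g}_{CM})}$ is small.

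To set this up, I would introduce the shorthand
\[
\varphi(t) := \dot{\mathbf{a}}(t) + \tfrac{1}{2}\omega(\mathbf{A}(t)-2B_t,\dot{\mathbf{A}}(t)),
\qquad
M_t := \int_0^t \langle \dot{\mathbf{A}},dB\rangle_H + \int_0^t\langle \varphi,dB^0\rangle_\mathbf{C},
\]
so that $J_\mathbf{h}=\exp(M_T-\tfrac{1}{2}Q_T)$ with $Q_T:=\int_0^T\|\dot{\mathbf{A}}\|_H^2 +\|\varphi\|_\mathbf{C}^2\,dt$ the quadratic variation of $M$ (the cross-terms vanish since $B$ and $B^0$ are independent). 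Then $J_\mathbf{h}^p = N_T^{1/2}\cdot \exp\!\left(\tfrac{2p^2-p}{2}Q_T\right)$, where $N_T:=\exp(2pM_T-2p^2 Q_T)$ is an exponential (local) martingale. Cauchy-Schwarz therefore gives
\[
\mathbb{E}[J_\mathbf{h}^p]\le \mathbb{E}[N_T]^{1/2}\,\mathbb{E}[\exp((2p^2-p)Q_T)]^{1/2}.
\]
Novikov's criterion for $N_T$ is exactly $\mathbb{E}[\exp(2p^2 Q_T)]<\infty$, which also dominates the second factor. So the whole problem reduces to proving $\mathbb{E}[\exp(2p^2 Q_T)]<\infty$ once $\|\mathbf{h}\|_{\mathcal{H}_T(\mathfrak{g}_{CM})}$ is small enough.

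Next I would obtain a pathwise bound on $Q_T$. The term $\int_0^T\|\dot{\mathbf{A}}\|_H^2\,dt$ is just a constant $\le \|\mathbf{h}\|^2$. For $\int_0^T\|\varphi\|^2\,dt$, using $\|x+y\|^2\le 2\|x\|^2+2\|y\|^2$, continuity of $\omega$ on $W\times W$ (with some constant $K$), and the continuous inclusion $H\hookrightarrow W$ (with constant $c$), together with $\sup_t\|\mathbf{A}(t)\|_W\le c\sqrt{T}\,\|\mathbf{A}\|_{\mathcal{H}_T(H)}$, I should be able to conclude a pointwise bound of the form
\[
Q_T \le C_\mathbf{h} + D_\mathbf{h}\,\|B\|_{\mathcal{W}_T(W)}^2,
\]
where $C_\mathbf{h}$ is polynomial in $\|\mathbf{h}\|$ and, crucially, $D_\mathbf{h}=O(\|\mathbf{h}\|_{\mathcal{H}_T(\mathfrak{g}_{CM})}^2)$. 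Plugging this into $\exp(2p^2 Q_T)$ and invoking Fernique (Remark~\ref{r.fernique}), the expectation is finite as long as $2p^2 D_\mathbf{h}<\delta_0$, which is ensured by choosing $\kappa(p)>0$ small enough (essentially $\kappa^2\sim \delta_0/p^2$ up to the constants coming from $\omega$ and the embedding).

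The main obstacle is the estimate on $Q_T$: everything hinges on showing that the dependence of $\|\varphi\|^2$ on $\|B\|_W$ comes with a prefactor of order $\|\mathbf{h}\|^2$ rather than just $\|\mathbf{h}\|$, since otherwise one could not absorb it into Fernique's threshold by shrinking $\|\mathbf{h}\|$. The remaining pieces — the algebraic manipulation with Cauchy-Schwarz and the verification of Novikov — are routine once that estimate is in hand.
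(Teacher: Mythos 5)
Your argument is correct, and it reaches the same final estimate as the paper by a genuinely different route. The paper exploits the product structure of the noise: it applies Tonelli to integrate out $B^0$ first, using that conditionally on $B$ the integrand $\dot{\mathbf{a}}+\tfrac12\omega(\mathbf{A}-2B,\dot{\mathbf{A}})$ is deterministic, so the $B^0$-stochastic integral is exactly Gaussian; this produces $\mathbb{E}_{B^0}[J_{\mathbf h}^p]=UV$ with $U$ a deterministic-integrand exponential whose moments are computed in closed form, and only then applies Cauchy--Schwarz in $B$. You instead work in the joint filtration, factor $J_{\mathbf h}^p=\mathcal{E}(2pM)_T^{1/2}\exp\bigl(\tfrac{2p^2-p}{2}Q_T\bigr)$, and invoke Novikov. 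Both approaches bottleneck at the identical pathwise bound $Q_T\le C_{\mathbf h}+D_{\mathbf h}\|B\|_{\mathcal{W}_T(W)}^2$ with $D_{\mathbf h}=O(\|\mathbf h\|_{\mathcal{H}_T(\mathfrak{g}_{CM})}^2)$, which you correctly identify as the crux (the quadratic dependence on $\|\mathbf h\|$ is what lets Fernique's threshold absorb the constant), so there is no circularity in using the same estimate to verify Novikov. Two small observations: (1) Novikov is not actually needed for an upper bound, since $N_T=\mathcal{E}(2pM)_T$ is a nonnegative local martingale and hence a supermartingale with $\mathbb{E}[N_T]\le 1$; (2) the paper's conditioning argument yields the exact identity $\mathbb{E}[J_{\mathbf h}]=1$ for $p=1$ with no smallness restriction on $\mathbf h$, which matters because $J_{\mathbf h}$ serves as the Radon--Nikodym density in Theorem \ref{t.qi}, whereas your route only gives finiteness for $\|\mathbf h\|$ small. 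For the proposition as stated, your proof is complete once the routine bound on $Q_T$ is written out.
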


\begin{proof}
For the purpose of this proof, let $\mathbb{E}_{B^0}$ and $\mathbb{E}_B$
denote expectation relative to $B^0$ and $B$, respectively.  We may write
\begin{align*} 
J_\mathbf{h}(B,B^0)^p
	&= \exp\left\{  p\int_0^T 
        \left\langle \dot{\mathbf{a}}(t) +
		\frac{1}{2}\omega(\mathbf{A}(t)-2B_t,\dot{\mathbf{A}}(t)),dB_t^0\right\rangle_\mathbf{C}
		\right\} \\
	&\quad\times \exp\left\{ p\int_0^T \langle \dot{\mathbf{A}}(t),dB_t\rangle_H
		- \frac{1}{2}p\int_0^T \|\dot{\mathbf{A}}(t)\|_H^2 \,dt \right\} \\
	&\quad \times \exp\left\{ 
		- \frac{1}{2}p\int_0^T
        \left\|\dot{\mathbf{a}}(t) +
		\frac{1}{2}\omega(\mathbf{A}(t)-2B_t,\dot{\mathbf{A}}(t))
            \right\|_\mathbf{C}^2 \,dt \right\}.
\end{align*}
Since
\begin{multline*}
\mathbb{E}_{B^0}\left[ \exp\left\{  p\int_0^T 
        \left\langle \dot{\mathbf{a}}(t) +
		\frac{1}{2}\omega(\mathbf{A}(t)-2B_t,\dot{\mathbf{A}}(t)),
		dB_t^0\right\rangle_\mathbf{C} \right\}\right] \\
	= \exp\left\{ 
		\frac{1}{2}p^2\int_0^T
        \left\|\dot{\mathbf{a}}(t) +
		\frac{1}{2}\omega(\mathbf{A}(t)-2B_t,\dot{\mathbf{A}}(t))
            \right\|_\mathbf{C}^2 \,dt \right\},
\end{multline*}
we may write $\mathbb{E}_{B^0}[J_\mathbf{h}(B,B^0)^p]= UV$, where
\[ U := \exp\left\{ p\int_0^T \langle \dot{\mathbf{A}}(t),dB_t\rangle_H
		- \frac{1}{2}p\int_0^T \|\dot{\mathbf{A}}(t)\|_H^2\, dt\right\} \]
and
\[ V := \exp\left\{\frac{1}{2}(p^2-p)\int_0^T \left\|\dot{\mathbf{a}}(t) +
		\frac{1}{2}\omega(\mathbf{A}(t)-2B_t,\dot{\mathbf{A}}(t))
		\right\|_\mathbf{C}^2\,dt\right\}. \]
In particular, when $p=1$, this and Tonelli's theorem imply that
\begin{align*} 
\mathbb{E}[J_\mathbf{h}(B,B^0)]
	&= \mathbb{E}_B\mathbb{E}_{B^0}[J_\mathbf{h}(B,B^0)] 
	= \mathbb{E}_B[U]
	= 1. 
\end{align*}
When $p>1$, applying Tonelli again and the Cauchy-Schwarz
inequality gives
\[ 
\mathbb{E}[J_\mathbf{h}(B,B^0)^p]
	= \mathbb{E}_B[UV] 
	\le\left(\mathbb{E}_B[U^2]\right)^{1/2}\left(\mathbb{E}_B[V^2]\right)^{1/2}.
\]
For the first factor, we have that
\begin{align*}
\mathbb{E}_B[U^2] 
	&= \exp\left(\frac{1}{2}(p^2-p) \int_0^T
		\|\dot{\mathbf{A}}(t)\|_H^2\,dt\right) 
	\le \exp\left(\frac{1}{2}(p^2-p)
		\|\mathbf{h}\|_{\mathcal{H}_T(\mathfrak{g}_{CM})}^2\right) 
	< \infty.
\end{align*}
For the second factor, first note that
\begin{align*}
\bigg\|\dot{\mathbf{a}}(t) +
		\frac{1}{2}\omega(\mathbf{A}&(t)-2B_t,\dot{\mathbf{A}}(t))
		\bigg\|_\mathbf{C}^2
	\le 2\|\dot{\mathbf{a}}(t)\|_\mathbf{C}^2 +
		2\cdot\frac{1}{4}\|\omega(\mathbf{A(t)}-2B_t,\dot{\mathbf{A}}(t))
		\|_\mathbf{C}^2 \\
	&\le 2\|\dot{\mathbf{a}}(t)\|_\mathbf{C}^2 +
		\frac{1}{2}\|\omega\|_0^2
		\|\mathbf{A}(t)-2B_t\|_W^2\|\dot{\mathbf{A}}(t)\|_W^2 \\
	&\le 2\|\dot{\mathbf{a}}(t)\|_\mathbf{C}^2 +
		\|\omega\|_0^2
		\left(\|\mathbf{A}(t)\|_W^2 + 4\|B\|_{\mathcal{W}_T(W)}^2\right)
		\|\dot{\mathbf{A}}(t)\|_W^2.
\end{align*}
Recall that 
$\|\cdot\|_W\le C\|\cdot \|_H$ for some $C<\infty$ (see for example Theorem
A.1  of \cite{DriverGordina2008}). 
Combining this with the fact that
\[ \|\mathbf{A}(t)\|_H \le \int_0^T \|\dot{\mathbf{A}}(s)\|_H\,ds 
	\le \sqrt{T}\left(\int_0^T\|\dot{\mathbf{A}}(s)\|_H^2\,ds\right)^{1/2}
	\le \sqrt{T}\|\mathbf{h}\|_{\mathcal{H}_T(\mathfrak{g}_{CM})}, \]
implies that
\begin{multline*}
V^2 
	\le \exp\left\{(p^2-p)
		\left(2\|\mathbf{h}\|_{\mathcal{H}_T(\mathfrak{g}_{CM})}^2
		+ C_2^4\|\omega\|_0^2T\|\mathbf{h}\|_{\mathcal{H}_T(\mathfrak{g}_{CM})}^4\right)\right\}
		\\
	\times \exp\left\{4(p^2-p)C_2^2
		\|\omega\|_0^2\|\mathbf{h}\|_{\mathcal{H}_T(\mathfrak{g}_{CM})}^2\|B\|_{\mathcal{W}_T(W)}^2
		\right\}.
\end{multline*}
So letting $\delta_0$ be as in Remark \ref{r.fernique},
$\mathbb{E}_B[V^2]<\infty$ as long as
\[ 4(p^2-p)C_2^2\|\omega\|_0^2\|\mathbf{h}\|_{\mathcal{H}_T(\mathfrak{g}_{CM})}^2 
	< \delta_0, \]
that is, for all $\|\mathbf{h}\|_{\mathcal{H}_T(\mathfrak{g}_{CM})}<\kappa
:=\sqrt{\delta_0/4(p^2-p)C_2^2\|\omega\|_0^2}$.
\end{proof}

In a similar way we may prove the following proposition.

\begin{prop}
\label{p.Jhderp}
Let $p\in[1,\infty)$ and $\mathbf{h}\in\mathcal{H}_T(\mathfrak{g}_{CM})$.  
Then there exists $\varepsilon_0=\varepsilon_0(p)>0$ such that 
\[ \mathbb{E}\left[\sup_{|\varepsilon|\le\varepsilon_0}
	\left|\frac{d}{d\varepsilon}
	J_{\varepsilon\mathbf{h}}(B,B^0)\right|^p\right] <\infty. \]
\end{prop}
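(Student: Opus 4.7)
The plan is to differentiate and apply Cauchy--Schwarz to separate $J_{\varepsilon\mathbf{h}}$ from its logarithmic derivative. Writing $\frac{d}{d\varepsilon}J_{\varepsilon\mathbf{h}} = \Phi(\varepsilon)J_{\varepsilon\mathbf{h}}$ with $\Phi(\varepsilon) := \frac{d}{d\varepsilon}\log J_{\varepsilon\mathbf{h}}$, we have
\[
\mathbb{E}\!\left[\sup_{|\varepsilon|\le\varepsilon_0}|\Phi(\varepsilon)J_{\varepsilon\mathbf{h}}|^p\right]
\le \mathbb{E}\!\left[\sup_{|\varepsilon|\le\varepsilon_0}|\Phi(\varepsilon)|^{2p}\right]^{1/2}
\mathbb{E}\!\left[\sup_{|\varepsilon|\le\varepsilon_0}J_{\varepsilon\mathbf{h}}^{2p}\right]^{1/2}.
\]
The first factor will be controlled for any $\varepsilon_0$, while the second will dictate the smallness condition on $\varepsilon_0$.

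Expanding $\omega(\varepsilon\mathbf{A}(t)-2B_t,\varepsilon\dot{\mathbf{A}}(t)) = \varepsilon^2\omega(\mathbf{A},\dot{\mathbf{A}}) - 2\varepsilon\omega(B_t,\dot{\mathbf{A}})$ by bilinearity shows that $S(\varepsilon):=\log J_{\varepsilon\mathbf{h}}$ is a polynomial in $\varepsilon$ of degree at most $4$, and hence $\Phi(\varepsilon)=S'(\varepsilon)$ is a polynomial in $\varepsilon$ of degree at most $3$. Setting $u(t):=\dot{\mathbf{a}}(t)-\omega(B_t,\dot{\mathbf{A}}(t))$ and $v(t):=\tfrac12\omega(\mathbf{A}(t),\dot{\mathbf{A}}(t))$, the coefficients of $S$ and $\Phi$ are linear combinations of: the Gaussians $\int_0^T\langle\dot{\mathbf{A}},dB\rangle_H$ and $\int_0^T\langle v,dB^0\rangle_\mathbf{C}$, which have deterministic variance; the conditionally Gaussian (given $B$) integral $\int_0^T\langle u,dB^0\rangle_\mathbf{C}$, whose conditional variance is bounded by $2\|\dot{\mathbf{a}}\|_{L^2}^2 + C\|\omega\|_0^2\|\mathbf{h}\|_{\mathcal{H}_T(\mathfrak{g}_{CM})}^2\|B\|_{\mathcal{W}_T(W)}^2$; and various polynomial-in-$B$ quantities such as $\int_0^T\|u\|_\mathbf{C}^2dt$ and $\int_0^T\langle u,v\rangle_\mathbf{C} dt$. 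By Fernique's estimate (Remark \ref{r.fernique}), every such coefficient lies in $L^{\infty-}$, so the first factor in the Cauchy--Schwarz bound is finite for any choice of $\varepsilon_0>0$.

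For the second factor we mimic the proof of Proposition \ref{p.Jhp}. The key observation is that the contributions to $S(\varepsilon)$ coming from the squared-norm terms of degrees $2$ and $4$ in $\varepsilon$ are nonpositive and may be discarded in an upper bound, yielding the pointwise estimate
\[
\sup_{|\varepsilon|\le\varepsilon_0}S(\varepsilon)
\le \varepsilon_0\Bigl(\bigl|\textstyle\int_0^T\!\langle\dot{\mathbf{A}},dB\rangle_H\bigr|+\bigl|\int_0^T\!\langle u,dB^0\rangle_\mathbf{C}\bigr|\Bigr)
+ \varepsilon_0^2\bigl|\textstyle\int_0^T\!\langle v,dB^0\rangle_\mathbf{C}\bigr|
+ \varepsilon_0^3\bigl|\textstyle\int_0^T\!\langle u,v\rangle_\mathbf{C}dt\bigr|.
\]
Applying Hölder's inequality with exponents $(4,4,4,4)$ splits $\mathbb{E}[\exp(2p\sup S)]$ into four factors. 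The two Gaussians with deterministic variance contribute trivially; the integral against $dB^0$ with integrand $u$ requires a Fernique-type smallness condition of precisely the form appearing in Proposition \ref{p.Jhp}, namely $c\, p^2 \varepsilon_0^2 \|\omega\|_0^2\|\mathbf{h}\|_{\mathcal{H}_T(\mathfrak{g}_{CM})}^2 < \delta_0$; and the $|\int\langle u,v\rangle dt|$ factor is handled by Cauchy--Schwarz in $t$ followed by another Fernique application. Choosing $\varepsilon_0$ to satisfy this constraint completes the proof. The main technical obstacle is the sign-bookkeeping in $S(\varepsilon)$: if the quartic negative term $-\tfrac{\varepsilon^4}{2}\|v\|_{L^2}^2$ or the quadratic $-\tfrac{\varepsilon^2}{2}\|u\|_{L^2}^2$ could not be discarded, a supremum over $\varepsilon$ would force integrability of exponentials with quartic growth in $B$, which Fernique does not provide.
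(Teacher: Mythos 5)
Your proof is correct and follows essentially the same route as the paper: write $\frac{d}{d\varepsilon}J_{\varepsilon\mathbf{h}} = \Phi(\varepsilon)J_{\varepsilon\mathbf{h}}$ with $\Phi$ a cubic polynomial in $\varepsilon$ whose coefficients (the paper's $\alpha_1,\ldots,\alpha_4$) lie in $L^{\infty-}$ by Gaussian/Fernique estimates, and control the $J_{\varepsilon\mathbf{h}}$ factor by the smallness condition of Proposition \ref{p.Jhp}. If anything, you are more explicit than the paper about pushing the supremum over $\varepsilon$ inside the expectation for the $J$ factor (by discarding the nonpositive terms of $\log J_{\varepsilon\mathbf{h}}$ and applying H\"older), a point the paper's proof leaves implicit.
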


\begin{proof}
Note that
\[ J_{\varepsilon\mathbf{h}}
	 = \exp\left(\varepsilon\alpha_1 + \varepsilon^2\alpha_2 +
		\varepsilon^3\alpha_3 + \varepsilon^4 \alpha_4 \right) \]
where
\begin{align} 
\label{e.alpha1}
\alpha_1 = \alpha_1(\mathbf{h}) 
	&= \int_0^T \langle \dot{\mathbf{A}}(t),dB_t\rangle_H + \langle
        \dot{\mathbf{a}}(t)-\omega(B_t,\dot{\mathbf{A}}(t)),dB^0_t\rangle_\mathbf{C} \\ 
\notag
\alpha_2 = \alpha_2(\mathbf{h})
	&= -\frac{1}{2}\int_0^T \|\dot{\mathbf{A}}(t)\|_H^2\,dt 
		+ \frac{1}{2}\int_0^T \langle
		\omega(\mathbf{A}(t),\dot{\mathbf{A}}(t)), dB^0_t\rangle_\mathbf{C}
		\\
\notag
	&\qquad -\frac{1}{2}\int_0^T \|\dot{\mathbf{a}}(t)-
		\omega(B_t,\dot{\mathbf{A}}(t))\|_\mathbf{C}^2 \,dt \\
\notag
\alpha_3 = \alpha_3(\mathbf{h})
	&= -\frac{1}{2}\int_0^T \langle \dot{\mathbf{a}}(t) -
		\omega(B_t,\dot{\mathbf{A}}(t)),\omega(\mathbf{A}(t),
		\dot{\mathbf{A}}(t))\rangle_\mathbf{C}\,dt, \text{ and} \\
\notag
\alpha_4 = \alpha_4(\mathbf{h})
	&= -\frac{1}{8} \int_0^T
	\|\omega(\mathbf{A}(t),\dot{\mathbf{A}}(t)\|_\mathbf{C}^2\,dt.
\end{align}
Thus,
\begin{equation}
\label{e.Jeps}
\frac{d}{d\varepsilon} J_{\varepsilon\mathbf{h}}
 	= J_{\varepsilon\mathbf{h}}\cdot
		(\alpha_1 + 2\varepsilon\alpha_2 +
		3\varepsilon^2\alpha_3 + 4\varepsilon^3 \alpha_4). 
\end{equation}
For fixed $p\in[1,\infty)$, we may choose $\varepsilon_0=\varepsilon_0(p)$ 
sufficiently small that $\varepsilon<\varepsilon_0$ implies
$\varepsilon\|\mathbf{h}\|_{\mathcal{H}_T(\mathfrak{g}_{CM})}<\kappa$, where
$\kappa$ is as given in Proposition \ref{p.Jhp}, and so
$\mathbb{E}[J_{\varepsilon\mathbf{h}}^p]<\infty$.

For the $\alpha_i$'s, note that $\int_0^T \langle \dot{\mathbf{A}},dB\rangle_H$
and
$\int_0^T\langle\omega(\mathbf{A},\dot{\mathbf{A}}),dB^0\rangle_\mathbf{C}$
are Gaussian and hence have finite moments of all orders.  Also, 
\begin{align*} 
\int_0^T \|\dot{\mathbf{a}}(t)
		-\omega(B_t,\dot{\mathbf{A}}&(t))\|_\mathbf{C}^2\,dt 
	\le 2\int_0^T \left(\|\dot{\mathbf{a}}(t)\|_\mathbf{C}^2
		+ \|\omega(B_t,\dot{\mathbf{A}}(t))\|_\mathbf{C}^2 \right)\,dt\\
	&\le 2\int_0^T \left(\|\dot{\mathbf{a}}(t)\|_\mathbf{C}^2
		+ \|\omega\|_0^2\|B\|^2_{\mathcal{W}_T(W)}
		\|\dot{\mathbf{A}}(t))\|_H^2\right)\,dt \\
	&\le 2\left(\|\mathbf{h}\|_{\mathcal{H}_T(\mathfrak{g}_{CM})}^2
		+ \|\omega\|_0^2\|B\|^2_{\mathcal{W}_T(W)}
		\|\mathbf{h}\|_{\mathcal{H}_T(\mathfrak{g}_{CM})}^2\right) \\
	&\le C\left(1 + \|B\|^2_{\mathcal{W}_T(W)}\right), 
\end{align*}
So by Fernique's Theorem (see Remark \ref{r.fernique}) this
term is in $L^p$ for all $p\in[1,\infty)$.  Now if
$N_t:=\int_0^t\langle\dot{\mathbf{a}}-\omega(B,\dot{\mathbf{A}}),dB^0\rangle_\mathbf{C}$,
then $N$ is a martingale and
$\langle N\rangle_T 
	= \int_0^T \|\dot{\mathbf{a}}
		-\omega(B,\dot{\mathbf{A}})\|_\mathbf{C}^2\,dt.$
So by the previous estimate, $\mathbb{E}[\langle N\rangle_T^p]<\infty$ for all
$p\in[1,\infty)$ and hence by the Burkholder-Davis-Gundy inequalities,
$\mathbb{E}|N_T|^p<\infty$. 
Finally, applying the Cauchy-Schwarz inequality and again the previous
estimate implies that
\begin{align}
\label{e.est} 
\int_0^T |\langle \dot{\mathbf{a}}(t) -
		\omega(B_t,\dot{\mathbf{A}}(t)),\omega(\mathbf{A}(t),
		\dot{\mathbf{A}}(t))\rangle_\mathbf{C}|\,dt
	\le C\left(1 + \|B\|^2_{\mathcal{W}_T(W)}\right)
\end{align}
which is again finite by Fernique's theorem. The remaining terms are deterministic and clearly finite.
\end{proof}

\begin{notation}
\label{n.Z}
For $\mathbf{h}_i=(\mathbf{A}_i,\mathbf{a}_i)\in\mathcal{H}_T(\mathfrak{g}_{CM})$, define
\begin{align*}
Z_i &:= Z_{\mathbf{h}_i}(B,B^0) 
	:= \int_0^T \langle \dot{\mathbf{A}}_i(t),dB_t\rangle_H + \langle
        \dot{\mathbf{a}}_i(t)-\omega(B_t,\dot{\mathbf{A}}_i(t)),
		dB_t^0\rangle_\mathbf{C},                           
\end{align*}
\begin{align*}
Z_{ij} &:= Z_{\mathbf{h}_i,\mathbf{h}_j}(B,B^0) 
	:= \int_0^T \langle \omega(\mathbf{A}_j(t),
		\dot{\mathbf{A}}_i(t)),dB_t^0\rangle_\mathbf{C} \\
	&\quad - \int_0^T \bigg[\langle \dot{\mathbf{A}}_i(t),\dot{\mathbf{A}}_j(t)
		\rangle_H 
		+ \langle \dot{\mathbf{a}}_i(t)-\omega(B_t,\dot{\mathbf{A}}_i(t)), \dot{\mathbf{a}}_j(t) 
        - \omega(B_t,\dot{\mathbf{A}}_j(t))\rangle_\mathbf{C} \bigg]\,dt, 
\end{align*}
\begin{align*}
Z_{ijk} := Z_{\mathbf{h}_i,\mathbf{h}_j,\mathbf{h}_k}(B,B^0) 
	&:= -\int_0^T \bigg[
            \langle \dot{\mathbf{a}}_i(t)+\omega(B_t,\dot{\mathbf{A}}_i(t)),
		\omega(\mathbf{A}_k(t),\dot{\mathbf{A}}_j(t))\rangle_\mathbf{C} \\
	&\qquad
            +\langle \dot{\mathbf{a}}_j(t)+\omega(B_t,\dot{\mathbf{A}}_j(t)),
		\omega(\mathbf{A}_k(t),\dot{\mathbf{A}}_i(t))\rangle_\mathbf{C}  \\
	&\qquad  +\langle \dot{\mathbf{a}}_k(t)+\omega(B_t,\dot{\mathbf{A}}_k(t)),
		\omega(\mathbf{A}_j(t),\dot{\mathbf{A}}_i(t))\rangle_\mathbf{C}
		\bigg]\,dt,          
\end{align*}
and
\begin{align*}
Z_{ijkl} &:= Z_{\mathbf{h}_i,\ldots,\mathbf{h}_l} 
	:= - \int_0^T \bigg[
            \langle\omega(\mathbf{A}_l(t),\dot{\mathbf{A}}_i(t)),
		\omega(\mathbf{A}_k(t),\dot{\mathbf{A}}_j(t))\rangle_\mathbf{C} \\
	&\qquad
            +\langle\omega(\mathbf{A}_k(t),\dot{\mathbf{A}}_i(t)),
		\omega(\mathbf{A}_l(t),\dot{\mathbf{A}}_j(t))\rangle_\mathbf{C} \\
	&\qquad
            +\langle\omega(\mathbf{A}_j(t),\dot{\mathbf{A}}_i(t)),
		\omega(\mathbf{A}_l(t),\dot{\mathbf{A}}_k(t))\rangle_\mathbf{C}
		\bigg]\,dt .
  \end{align*}
\end{notation}

The following lemma provides some motivation for Notation \ref{n.Z}.  In
particular, these functions will comprise the factors appearing in the
integration by parts formulae.

\begin{lemma}
\label{l.zDerivs}  
Let $J_\mathbf{h}$ be as given in equation (\ref{e.Jh}) and $Z_i$, $Z_{ij}$, $Z_{ijk}$,
and $Z_{ijkl}$ be as in Notation \ref{n.Z}.  Then
\begin{align}
\label{e.1}
\tag{ {\it i}}
Z_i &= \frac{d}{d\varepsilon}\bigg|_0 J_{\varepsilon \mathbf{h}_i} \\
\label{e.2}
\tag{{\it ii}}
Z_{ij} &= \frac{d}{d\varepsilon}\bigg|_0  Z_i (B-\varepsilon \mathbf{A}_j, B^0-\varepsilon \mathbf{a}_j -
        u_{\varepsilon \mathbf{A}_j}) \\
\label{e.3}
\tag{{\it iii}}
Z_{ijk} &= \frac{d}{d\varepsilon}\bigg|_0  Z_{ij} (B-\varepsilon \mathbf{A}_k, B^0-\varepsilon \mathbf{a}_k -
        u_{\varepsilon \mathbf{A}_k}) \\
\label{e.4}
\tag{{\it iv}}
Z_{ijkl} &= \frac{d}{d\varepsilon}\bigg|_0  Z_{ijk} (B-\varepsilon \mathbf{A}_l, B^0-\varepsilon \mathbf{a}_l -
        u_{\varepsilon \mathbf{A}_l}).
\end{align}
\end{lemma}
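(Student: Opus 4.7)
The plan is to establish each of (i)--(iv) by direct computation, differentiating the explicit formulas and matching terms.

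For part (i), I would reuse the decomposition from the proof of Proposition \ref{p.Jhderp}, namely $J_{\varepsilon\mathbf{h}_i}=\exp(\varepsilon\alpha_1(\mathbf{h}_i)+\varepsilon^2\alpha_2(\mathbf{h}_i)+\varepsilon^3\alpha_3(\mathbf{h}_i)+\varepsilon^4\alpha_4(\mathbf{h}_i))$. Differentiating in $\varepsilon$ at $0$ yields $\alpha_1(\mathbf{h}_i)$, and comparing this with equation (\ref{e.alpha1}) and Notation \ref{n.Z} shows that $\alpha_1(\mathbf{h}_i) = Z_i$ on the nose. This part is essentially a reading off.

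For parts (ii)--(iv), the key ingredient is the expansion
\[ u_{\varepsilon\mathbf{A}_j}(t) = -\varepsilon\int_0^t\omega(B_s,\dot{\mathbf{A}}_j(s))\,ds + \frac{\varepsilon^2}{2}\int_0^t\omega(\mathbf{A}_j(s),\dot{\mathbf{A}}_j(s))\,ds, \]
from which $\frac{d}{d\varepsilon}|_0 \dot u_{\varepsilon\mathbf{A}_j}(t) = -\omega(B_t,\dot{\mathbf{A}}_j(t))$. To prove (ii), I would substitute $(B,B^0)\mapsto(B-\varepsilon\mathbf{A}_j,B^0-\varepsilon\mathbf{a}_j-u_{\varepsilon\mathbf{A}_j})$ directly into the defining formula for $Z_i$, and then apply the product rule to differentiate at $\varepsilon=0$: the Gaussian integral $\int_0^T\langle\dot{\mathbf{A}}_i,dB\rangle_H$ contributes $-\int_0^T\langle\dot{\mathbf{A}}_i,\dot{\mathbf{A}}_j\rangle_H\,dt$, while the $\mathbf{C}$-valued stochastic integral splits into a contribution from differentiating the integrand (giving the $\int_0^T\langle\omega(\mathbf{A}_j,\dot{\mathbf{A}}_i),dB^0\rangle_\mathbf{C}$ term) and a contribution from differentiating the integrator $dB^0 - \varepsilon\,d\mathbf{a}_j - d u_{\varepsilon\mathbf{A}_j}$, which produces the cross term $-\int_0^T\langle\dot{\mathbf{a}}_i-\omega(B,\dot{\mathbf{A}}_i),\dot{\mathbf{a}}_j-\omega(B,\dot{\mathbf{A}}_j)\rangle_\mathbf{C}\,dt$. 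Collecting these agrees exactly with $Z_{ij}$.

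Parts (iii) and (iv) follow by the identical procedure applied to $Z_{ij}$ and $Z_{ijk}$ respectively. The three-index case brings only extra algebraic bookkeeping: each of the three ``slots'' carrying a $B_t$ (namely the two inside $\omega(B_t,\dot{\mathbf{A}}_\cdot)$ and the integrator $dB^0$, replaced by the derivative of $u_{\varepsilon\mathbf{A}_k}$) contributes a symmetric variant of $\omega(\mathbf{A}_k,\dot{\mathbf{A}}_\cdot)$, giving the three summands in $Z_{ijk}$. Similarly, each of the two $B_t$'s appearing in $Z_{ijk}$ together with the single stochastic integrator yields the three terms in $Z_{ijkl}$, all now purely deterministic.

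The main obstacle, and really the only delicate point, is the justification for differentiating under the stochastic integrals in (ii), since replacing $B^0$ by $B^0-\varepsilon\mathbf{a}_j-u_{\varepsilon\mathbf{A}_j}$ mixes a Brownian term with a Cameron-Martin term that itself depends on $B$. However, because the translating processes $\varepsilon\mathbf{A}_j$ and $u_{\varepsilon\mathbf{A}_j}$ are of bounded variation and polynomial in $\varepsilon$, the substituted integrals reduce to the original Itô integral plus Riemann-Stieltjes integrals that are smooth in $\varepsilon$, so the differentiation is legitimate term by term; the requisite uniform $L^p$ bounds for exchanging the derivative and the expectation (needed later when these identities are applied) are already provided by Proposition \ref{p.Jhderp} and the estimates used in its proof.
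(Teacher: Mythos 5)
Your proposal is correct and follows essentially the same route as the paper: the paper likewise proves \eqref{e.1} by reading off the coefficient $\alpha_1(\mathbf{h}_i)$ from the expansion $J_{\varepsilon\mathbf{h}}=\exp(\varepsilon\alpha_1+\varepsilon^2\alpha_2+\varepsilon^3\alpha_3+\varepsilon^4\alpha_4)$, and proves \eqref{e.2} by expanding $Z_i(B-\varepsilon\mathbf{A}_j,B^0-\varepsilon\mathbf{a}_j-u_{\varepsilon\mathbf{A}_j})$ as the cubic polynomial $Z_i+\varepsilon Z_{ij}+\varepsilon^2\beta_2+\varepsilon^3\beta_3$ in $\varepsilon$ (exactly what your quadratic expansion of $u_{\varepsilon\mathbf{A}_j}$ yields upon substitution), with \eqref{e.3} and \eqref{e.4} declared analogous. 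Your closing observation that the substituted expressions are polynomials in $\varepsilon$ --- so that differentiation is term-by-term and no delicate interchange with stochastic integration is required --- is precisely the justification the paper leaves implicit.
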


\begin{proof}
The lemma follows from simple computations.  For example, recall from equation
(\ref{e.Jeps}) that
\[ \left(\frac{d}{d\varepsilon} J_{\varepsilon\mathbf{h}}\right)
		\bigg|_{\varepsilon=0}
 	= \left(J_{\varepsilon\mathbf{h}}\cdot
		(\alpha_1 + 2\varepsilon\alpha_2 +
		3\varepsilon^2\alpha_3 + 4\varepsilon^3 \alpha_4)\right)
		\bigg|_{\varepsilon=0}
	= \alpha_1,\]
where $\alpha_1=\alpha_1(\mathbf{h})$ is given in (\ref{e.alpha1}).  Taking
$\mathbf{h}=\mathbf{h}_i$ and noting that
$\alpha_1(\mathbf{h}_i)=Z_{\mathbf{h}_i}=Z_i$ completes the proof of \eqref{e.1}.

Similarly, it may be checked that
\begin{equation}
\label{e.Zieps} 
Z_i(B-\varepsilon \mathbf{A}_j,B^0-\varepsilon \mathbf{a}_j-u_{\varepsilon \mathbf{A}_j})
	= Z_i + \varepsilon Z_{ij} + \varepsilon^2\beta_2 +
		\varepsilon^3\beta_3, 
\end{equation}
where
\begin{multline}
\label{e.beta2}
\beta_2 = -\int_0^T\bigg\{
		\frac{1}{2}\langle\dot{\mathbf{a}}_i(t)-\omega(B_t,\dot{\mathbf{A}}_i(t)),\omega(\mathbf{A}_j(t),
		\dot{\mathbf{\mathbf{A}}}_j(t))\rangle_\mathbf{C}	
		\\    
	+ \inprod{\dot{\mathbf{a}}_j(t) - \omega(B_t,\dot{\mathbf{A}}_j(t)),\omega(
		\mathbf{A}_j(t),\dot{\mathbf{A}}_i(t))}_\mathbf{C}\bigg\}\,dt
\end{multline}
and
\begin{equation}
\label{e.beta3}
\beta_3 = -\frac{1}{2}\int_0^T		
	\langle\omega(\mathbf{A}_j(t),\dot{\mathbf{A}}_i(t)),
	\omega(\mathbf{A}_j(t),\dot{\mathbf{A}}_j(t))\rangle_\mathbf{C}\,dt, 
\end{equation}
thus satisfying \eqref{e.2}.  The computations for \eqref{e.3} and \eqref{e.4}
are analogous.
\end{proof}

\begin{prop}
\label{p.Zp}
For all $p\in[1,\infty)$, $\mathbb{E}|Z|^p <\infty$, where $Z$ represents any
element from $\{Z_i,Z_{ij}, Z_{ijk}, Z_{ijkl}:\mathbf{h}_i,\mathbf{h}_j,\mathbf{h}_k,
\mathbf{h}_l\in\mathcal{H}_T(\mathfrak{g}_{CM})\}$.
\end{prop}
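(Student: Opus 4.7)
The four quantities in Notation \ref{n.Z} naturally split into cases of increasing complexity, and in each case the bounds on the ingredients have already been set up in the proof of Proposition \ref{p.Jhderp}. First I would observe that $Z_{ijkl}$ is purely deterministic, so $\mathbb{E}|Z_{ijkl}|^p<\infty$ is immediate from the continuity of $\omega$ with operator norm $\|\omega\|_0$, the continuous embedding $\|\cdot\|_W\le C\|\cdot\|_H$, and the elementary bound $\|\mathbf{A}_i(t)\|_H\le\sqrt{T}\,\|\mathbf{h}_i\|_{\mathcal{H}_T(\mathfrak{g}_{CM})}$ used on page preceding.

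Next, for $Z_{ijk}$, each integrand is of the form $\langle\dot{\mathbf{a}}_i(t)+\omega(B_t,\dot{\mathbf{A}}_i(t)),\omega(\mathbf{A}_k(t),\dot{\mathbf{A}}_j(t))\rangle_\mathbf{C}$, which depends on the Brownian path only through the linear factor $\omega(B_t,\dot{\mathbf{A}}_i(t))$. Bounding each inner product by $\|\omega\|_0$ times a product of Hilbert norms and using Cauchy-Schwarz in $t$, exactly as in the estimate \eqref{e.est} of Proposition \ref{p.Jhderp}, yields
\[ |Z_{ijk}|\le C_{\mathbf{h}}\bigl(1+\|B\|_{\mathcal{W}_T(W)}\bigr), \]
whose $p$-th moment is finite for every $p$ by Fernique's theorem (Remark \ref{r.fernique}).

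The main work is for $Z_i$ and $Z_{ij}$, each of which contains an Itô integral against $B^0$ whose integrand involves $B$. I would decompose each $Z$ as (Wiener integral in $B$) $+$ (Itô martingale against $B^0$) $+$ (Riemann-type remainder), and estimate each piece separately. The Wiener integrals $\int_0^T\langle\dot{\mathbf{A}}_i,dB\rangle_H$ and $\int_0^T\langle\omega(\mathbf{A}_j,\dot{\mathbf{A}}_i),dB^0\rangle_\mathbf{C}$ have deterministic integrands and hence are Gaussian with moments of all orders. For the $B^0$-martingale parts, set
\[ N_t:=\int_0^t\langle\dot{\mathbf{a}}_i(s)-\omega(B_s,\dot{\mathbf{A}}_i(s)),dB_s^0\rangle_\mathbf{C}, \]
and similarly for the corresponding piece of $Z_{ij}$. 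In each case the quadratic variation $\langle N\rangle_T$ is a time integral of the same form already controlled by $C(1+\|B\|_{\mathcal{W}_T(W)}^2)$ in Proposition \ref{p.Jhderp}, so $\mathbb{E}[\langle N\rangle_T^p]<\infty$ for every $p$ by Fernique, and Burkholder-Davis-Gundy then gives $\mathbb{E}|N_T|^p<\infty$. The purely-deterministic and purely-$B$-polynomial remainders in $Z_{ij}$ are handled exactly as for $Z_{ijk}$ above.

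The only real obstacle is keeping the bookkeeping honest: one must isolate the stochastic-integral pieces to apply BDG, the deterministic-integrand pieces to invoke Gaussianity, and the mixed Riemann-type pieces to use the Fernique bound, and verify that none of these introduce dependence on the paths $\mathbf{h}_i,\ldots,\mathbf{h}_l$ worse than polynomial in their $\mathcal{H}_T(\mathfrak{g}_{CM})$-norms. Once this decomposition is performed the estimates are precisely those already set up in Proposition \ref{p.Jhderp}, so no new analytic input is required.
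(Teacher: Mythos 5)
Your proposal is correct and follows essentially the same route as the paper, which simply observes that $Z_i=\alpha_1(\mathbf{h}_i)$ was already handled in the proof of Proposition \ref{p.Jhderp} (Gaussianity of the deterministic-integrand integrals, Burkholder--Davis--Gundy plus Fernique for the martingale term, and Fernique for the polynomially-bounded Lebesgue integrals), that $Z_{ij}$ and $Z_{ijk}$ are treated by the same estimates, and that $Z_{ijkl}$ is deterministic. Your write-up just makes explicit the decomposition that the paper leaves implicit.
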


\begin{proof}
The integrability of $Z_i=\alpha_1(h_i)$ was already verified in the proof of
Proposition \ref{p.Jhderp}.  The terms in $Z_{ij}$ and $Z_{ijk}$ can be handled similarly
as in that proof, and $Z_{ijkl}$ is deterministic and clearly finite.
\end{proof}

In a similar way to Propositions \ref{p.Jhp} and \ref{p.Jhderp} we may prove the following.
\begin{prop}
\label{p.Zderp}
For any $p\in[1,\infty)$ and
$\mathbf{h}=(\mathbf{A},\mathbf{a})\in\mathcal{H}_T(\mathfrak{g}_{CM})$,  
\[ \mathbb{E}\left[\sup_{|\varepsilon|\le1}
	\left|Z(B-\varepsilon \mathbf{A}, B^0-\varepsilon
		\mathbf{a} - u_{\varepsilon \mathbf{A}})\right|^p\right] <\infty \]
and
\[ \mathbb{E}\left[\sup_{|\varepsilon|\le1}
	\left|\frac{d}{d\varepsilon} Z(B-\varepsilon \mathbf{A}, B^0-\varepsilon
		\mathbf{a} - u_{\varepsilon \mathbf{A}})\right|^p\right] <\infty, \]
where $Z$ represents any element from $\{Z_i,Z_{ij},
Z_{ijk}:\mathbf{h}_i,\mathbf{h}_j,\mathbf{h}_k\in\mathcal{H}_T(\mathfrak{g}_{CM})\}$. 
\end{prop}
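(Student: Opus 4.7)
The plan is to use the fact that each of the functionals $Z_i$, $Z_{ij}$, $Z_{ijk}$, when composed with the translation $(B,B^0)\mapsto (B-\varepsilon\mathbf{A},B^0-\varepsilon\mathbf{a}-u_{\varepsilon\mathbf{A}})$, becomes a \emph{polynomial in $\varepsilon$} whose coefficients are random variables already controlled in $L^{\infty-}$. Equation \eqref{e.Zieps} of Lemma \ref{l.zDerivs} is the prototype: it expresses the translated $Z_i$ as $Z_i+\varepsilon Z_{ij}+\varepsilon^2\beta_2+\varepsilon^3\beta_3$ with $\beta_2,\beta_3$ given explicitly in \eqref{e.beta2}-\eqref{e.beta3}. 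The corresponding expansions for the translated $Z_{ij}$ and $Z_{ijk}$ are analogous finite polynomials in $\varepsilon$ of degree at most $4$ and $5$, respectively, whose coefficients are either deterministic integrals (as in $Z_{ijkl}$) or iterated $Z$-type expressions together with ``remainder'' terms of the same flavor as $\beta_2,\beta_3$, namely quantities built out of $\dot{\mathbf{a}}$, $\dot{\mathbf{A}}$, $\mathbf{A}$, $\omega$ and $B_t$ integrated against $dt$ or $dB^0$.

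First I would write out these polynomial expansions explicitly (the computation is routine once \eqref{e.Zieps} is in hand, since translating $B$ and $B^0-u_{\varepsilon\mathbf{A}}$ only introduces polynomial dependence on $\varepsilon$ through $\mathbf{A}(t)$, $\mathbf{a}(t)$ and the quadratic form $u_{\varepsilon\mathbf{A}}$). Once this is done, both
\[
\sup_{|\varepsilon|\le 1}\bigl|Z(B-\varepsilon\mathbf{A},B^0-\varepsilon\mathbf{a}-u_{\varepsilon\mathbf{A}})\bigr|
\quad\text{and}\quad
\sup_{|\varepsilon|\le 1}\Bigl|\tfrac{d}{d\varepsilon}Z(B-\varepsilon\mathbf{A},B^0-\varepsilon\mathbf{a}-u_{\varepsilon\mathbf{A}})\Bigr|
\]
are bounded by a finite sum of absolute values of the polynomial coefficients. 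It then suffices to check that each such coefficient lies in $L^{\infty-}$; by Minkowski's inequality this gives the required $L^p$ estimates.

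The iterated $Z$-type coefficients are handled by Proposition \ref{p.Zp}. The remaining coefficients are of the same form as the $\beta_j$ in \eqref{e.beta2}-\eqref{e.beta3} and the $\alpha_j$ in \eqref{e.alpha1}, i.e., either deterministic integrals bounded by powers of $\|\mathbf{h}\|_{\mathcal{H}_T(\mathfrak{g}_{CM})}$, Gaussian stochastic integrals, or quantities of the form $\int_0^T\langle\dot{\mathbf{a}}-\omega(B,\dot{\mathbf{A}}),\omega(\mathbf{A},\dot{\mathbf{A}})\rangle_\mathbf{C}\,dt$. The first are trivial, the second have finite moments of all orders, and the third are dominated, exactly as in the bound \eqref{e.est} from the proof of Proposition \ref{p.Jhderp}, by $C(1+\|B\|_{\mathcal{W}_T(W)}^2)$, which is in $L^{\infty-}$ by Fernique's theorem (Remark \ref{r.fernique}). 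For the stochastic-integral pieces that are not already Gaussian (such as $\int_0^\cdot\langle\dot{\mathbf{a}}-\omega(B,\dot{\mathbf{A}}),dB^0\rangle_\mathbf{C}$), I would invoke the Burkholder-Davis-Gundy inequalities together with the preceding $L^{\infty-}$ bound on the quadratic variation, exactly as was done for $N_T$ in the proof of Proposition \ref{p.Jhderp}.

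The main obstacle, such as it is, will be purely bookkeeping: writing down the polynomial expansions of the translated $Z_{ij}$ and $Z_{ijk}$ and identifying the finitely many new coefficient types that appear, so as to verify that each one falls into one of the three categories above. Once the expansions are laid out, no new analytic input is needed beyond Fernique's theorem, Burkholder-Davis-Gundy, and the estimates already established in the proofs of Propositions \ref{p.Jhp}, \ref{p.Jhderp} and \ref{p.Zp}.
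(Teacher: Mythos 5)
Your proposal is correct and follows essentially the same route as the paper: expand the translated $Z$ as a polynomial in $\varepsilon$ (with \eqref{e.Zieps} as the prototype), bound the supremum over $|\varepsilon|\le1$ by the sum of the coefficients' absolute values, and check each coefficient is in $L^{\infty-}$ via Proposition \ref{p.Zp}, the estimate \eqref{e.est}, and Fernique's theorem. The paper's proof is just a terser version of this same argument (your stated degree bounds for the translated $Z_{ij}$ and $Z_{ijk}$ are generous but harmless, since only finiteness of the degree matters).
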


\begin{proof}
Recall from equation (\ref{e.Zieps}) that
\[ Z_i(B-\varepsilon \mathbf{A}_j, B^0-\varepsilon
		\mathbf{a}_j - u_{\varepsilon \mathbf{A}_j})
	= Z_i + Z_{ij}\varepsilon + \beta_2\varepsilon^2 +
		\beta_3\varepsilon^3, 
\]
where $\beta_2$ and $\beta_3$ are as given in \eqref{e.beta2} and
\eqref{e.beta3}.  The integrability of $Z_i$ and $Z_{ij}$ follows from
Proposition \ref{p.Zp}, and thus one need only justify the integrability of
$\beta_2$ (as $\beta_3$ is deterministic).  This is easily done using
the polynomial integrability of $\|B\|_{\mathcal{W}_T(W)}$ (compare with
(\ref{e.est})).  Similar arguments work for $Z_{ij}$ and $Z_{ijk}$.
\end{proof}

\begin{notation}
For $m\in\mathbb{N}$, let
\begin{multline*} \Lambda_m := \{\text{partitions } \theta \text{ of }
    \{1,\ldots,m\} : \\
    \theta=\{\gamma^\theta_1,\ldots,\gamma^\theta_{k_\theta}\} \text{ with } \#\gamma^\theta_r\le
    4 \text{ for } r=1,\ldots,k_\theta\}.
\end{multline*}
For $\gamma=\{\ell_1,\ldots,\ell_n\}\in\theta\in\Lambda_m$, we will
always assume that elements are listed in increasing order
$\ell_1<\cdots<\ell_n$.  (Note that $1\le n\le 4$.)
\end{notation}

\begin{notation}
\label{n.Phi}
For any $m\in\mathbb{N}$,
$\gamma=\{\ell_1,\ldots,\ell_n\}\in\theta\in\Lambda_m$, and
$\mathbf{h}_1,\ldots,\mathbf{h}_m\in\mathcal{H}_T(\mathfrak{g}_{CM})$ with $\mathbf{h}_k=(\mathbf{A}_k,\mathbf{a}_k)$, let
$Z_{\gamma} := Z_{\ell_1\cdots\ell_n}$ where the right hand side is as defined
in Notation \ref{n.Z}.  Also let
$\Phi_{\mathbf{h}_1,\ldots,\mathbf{h}_m}=\Phi_{\mathbf{h}_1,\ldots,\mathbf{h}_m}(B,B^0)$
be defined by 
\[\Phi_{\mathbf{h}_1,\ldots,\mathbf{h}_m}
    := \sum_{\theta\in\Lambda_m} Z_{\gamma^\theta_1}\cdots Z_{\gamma^\theta_{k_\theta}}.\]
Further, for $\mathbf{h}_{m+1}\in \mathcal{H}_T(\mathfrak{g}_{CM})$, let 
\[ Z_{\gamma^\theta_j}^{\varepsilon \mathbf{h}_{m+1}}
	:= Z_{\gamma^\theta_j}(B-\varepsilon \mathbf{A}_{m+1},B^0-\varepsilon
		\mathbf{a}_{m+1}-u_{\varepsilon \mathbf{A}_{m+1}}),\]
where $u_\mathbf{A}$ is as defined in (\ref{e.uA}), and
\begin{align*} 
\Phi_{\mathbf{h}_1,\ldots,\mathbf{h}_m}^{\varepsilon \mathbf{h}_{m+1}}
	&:= \Phi_{\mathbf{h}_1,\ldots,\mathbf{h}_m}(B-\varepsilon \mathbf{A}_{m+1},B^0-\varepsilon
		\mathbf{a}_{m+1}-u_{\varepsilon \mathbf{A}_{m+1}}) \\
	&= \sum_{\theta\in\Lambda_m} Z_{\gamma^\theta_1}^{\varepsilon \mathbf{h}_{m+1}}\cdots
		Z_{\gamma^\theta_{k_\theta}}^{\varepsilon \mathbf{h}_{m+1}}.
\end{align*}
\end{notation}

\begin{definition}
Given a normed space $X$ and a function $F:X\rightarrow\mathbb{R}$, we say $F$ 
is {\em polynomially bounded} if
there exist constants $K,M<\infty$ such that
\[ |F(x)|\le 
	K\left(1+ \|x\|_{X}\right)^M \]
for all $x\in X$.
\end{definition}

\begin{definition}
Given $\mathbf{h}\in\mathcal{H}_T(\mathfrak{g}_{CM})$,
we say a function $F:\mathcal{W}_T(G)\rightarrow\mathbb{R}$ {\em
is right $\mathbf{h}$-differentiable} if 
\[ (\hat{\mathbf{h}}F)(\mathbf{g}) 
	:= \frac{d}{d\varepsilon}\bigg|_0 F(\varepsilon \mathbf{h}\cdot \mathbf{g}) \]
exists for all $\mathbf{g}\in\mathcal{W}_T(G)$.  We will say that $F$ is
{\em smooth} if $(\hat{\mathbf{h}}_1\cdots\hat{\mathbf{h}}_mF)(\mathbf{g})$
exists for all $m\in\mathbb{N}$,
$\mathbf{h}_1,\ldots,\mathbf{h}_m\in\mathcal{H}_T(\mathfrak{g}_{CM})$, and 
$\mathbf{g}\in\mathcal{W}_T(G)$.
\end{definition}

\begin{theorem}
\label{t.pathIBP}
Let $m\in\mathbb{N}$ and
$\mathbf{h}_1,\ldots,\mathbf{h}_m\in\mathcal{H}_T(\mathfrak{g}_{CM})$, and suppose that
$F:\mathcal{W}_T(G)\rightarrow\mathbb{R}$ is a smooth function 
such that $F$ and its right derivatives of all orders are polynomially
bounded.  Then
\[ \mathbb{E}\left[(\hat{\mathbf{h}}_1\cdots\hat{\mathbf{h}}_m F)(\xi)\right]
    = \mathbb{E}\left[F(\xi) \Phi_{\mathbf{h}_1,\ldots,\mathbf{h}_m}\right] \]
and $\mathbb{E}|\Phi_{\mathbf{h}_1,\ldots,\mathbf{h}_m}|^p<\infty$ for all $p\in[1,\infty)$.
\end{theorem}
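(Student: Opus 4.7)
The plan is to proceed by induction on $m$, integrating by parts one derivative at a time via the quasi-invariance formula of Theorem~\ref{t.qi}. For the base case $m=1$, I note that $\Lambda_1$ contains the single partition $\{\{1\}\}$, so $\Phi_{\mathbf{h}_1} = Z_1$. Writing $(\hat{\mathbf{h}}_1 F)(\xi) = \frac{d}{d\varepsilon}\big|_{0} F(\varepsilon\mathbf{h}_1\cdot\xi)$ and applying Theorem~\ref{t.qi} with $\mathbf{h}\mapsto \varepsilon\mathbf{h}_1$ and $Z\equiv 1$ gives $\mathbb{E}[F(\varepsilon\mathbf{h}_1\cdot\xi)] = \mathbb{E}[F(\xi)J_{\varepsilon\mathbf{h}_1}]$. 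Differentiating at $\varepsilon=0$---with the interchange of $\frac{d}{d\varepsilon}$ and $\mathbb{E}$ justified via the mean value theorem and dominated convergence using Propositions~\ref{p.Jhp} and \ref{p.Jhderp}, polynomial boundedness of $F$, and Fernique's theorem (Remark~\ref{r.fernique})---and applying Lemma~\ref{l.zDerivs}\eqref{e.1} then yields $\mathbb{E}[(\hat{\mathbf{h}}_1 F)(\xi)] = \mathbb{E}[F(\xi)Z_1]$.

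For the inductive step, I set $G := \hat{\mathbf{h}}_m F$ and observe that $G$ inherits smoothness and polynomial boundedness of all its right derivatives from $F$, since every right derivative of $G$ is a right derivative of $F$. Applying the inductive hypothesis to $G$ reduces the problem to
\[ \mathbb{E}[(\hat{\mathbf{h}}_1\cdots\hat{\mathbf{h}}_m F)(\xi)] = \mathbb{E}[(\hat{\mathbf{h}}_m F)(\xi)\,\Phi_{\mathbf{h}_1,\ldots,\mathbf{h}_{m-1}}]. \]
I then rewrite $(\hat{\mathbf{h}}_m F)(\xi) = \frac{d}{d\varepsilon}\big|_0 F(\varepsilon\mathbf{h}_m\cdot \xi)$ and apply Theorem~\ref{t.qi} with $Z=\Phi_{\mathbf{h}_1,\ldots,\mathbf{h}_{m-1}}(B,B^0)$ and $\mathbf{h}\mapsto \varepsilon\mathbf{h}_m$ to transform the right-hand side into $\mathbb{E}[F(\xi)\,\Phi^{\varepsilon\mathbf{h}_m}_{\mathbf{h}_1,\ldots,\mathbf{h}_{m-1}}\,J_{\varepsilon\mathbf{h}_m}]$. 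Pulling $\frac{d}{d\varepsilon}\big|_0$ inside the expectation (justified by Propositions~\ref{p.Jhp}--\ref{p.Zderp}) produces
\[ \mathbb{E}[(\hat{\mathbf{h}}_1\cdots\hat{\mathbf{h}}_m F)(\xi)] = \mathbb{E}\left[F(\xi)\left(\frac{d}{d\varepsilon}\bigg|_0 \Phi^{\varepsilon\mathbf{h}_m}_{\mathbf{h}_1,\ldots,\mathbf{h}_{m-1}} + \Phi_{\mathbf{h}_1,\ldots,\mathbf{h}_{m-1}}\cdot Z_m\right)\right]. \]

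The remaining task is combinatorial: to identify the bracketed expression with $\Phi_{\mathbf{h}_1,\ldots,\mathbf{h}_m}$. Expanding the first term using the product rule together with Lemma~\ref{l.zDerivs} produces $\sum_{\theta\in\Lambda_{m-1}}\sum_s Z_{\gamma^\theta_s\cup\{m\}}\prod_{r\neq s} Z_{\gamma^\theta_r}$, where contributions from blocks with $\#\gamma^\theta_s = 4$ vanish automatically since $Z_{ijkl}$ is deterministic in $(B,B^0)$ and so its $\varepsilon$-derivative is zero. These terms correspond bijectively to partitions of $\{1,\ldots,m\}$ obtained from some $\theta\in\Lambda_{m-1}$ by inserting $m$ into an existing block of size at most $3$, while $\Phi_{\mathbf{h}_1,\ldots,\mathbf{h}_{m-1}}\cdot Z_m$ corresponds to those obtained by appending $\{m\}$ as a new singleton block. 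Together these two constructions enumerate each element of $\Lambda_m$ exactly once, so the bracketed expression equals $\Phi_{\mathbf{h}_1,\ldots,\mathbf{h}_m}$. The bound $\mathbb{E}|\Phi_{\mathbf{h}_1,\ldots,\mathbf{h}_m}|^p<\infty$ for every $p\in[1,\infty)$ follows from Proposition~\ref{p.Zp} and Hölder's inequality, as $\Phi_{\mathbf{h}_1,\ldots,\mathbf{h}_m}$ is a finite sum of products of finitely many $Z$-terms. The main technical obstacle I anticipate is the uniform-in-$\varepsilon$ $L^1$-domination needed to differentiate under the expectation: the integrand $\Phi^{\varepsilon\mathbf{h}_m}_{\mathbf{h}_1,\ldots,\mathbf{h}_{m-1}}J_{\varepsilon\mathbf{h}_m}$ is a product of several $\varepsilon$-dependent factors whose derivatives each require their own $L^p$-bound, and combining these with the polynomial growth of $F$ and the Fernique estimate for $\|\xi\|$ cleanly is where the bulk of the work will be concentrated.
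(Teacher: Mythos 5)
Your proposal is correct and follows essentially the same route as the paper: induction on the number of derivatives, peeling off the innermost derivative via Theorem~\ref{t.qi} with $Z=\Phi_{\mathbf{h}_1,\ldots,\mathbf{h}_{m-1}}$, justifying the interchange of $\frac{d}{d\varepsilon}\big|_0$ and $\mathbb{E}$ by Propositions~\ref{p.Jhp}--\ref{p.Zderp} together with the polynomial-boundedness/Fernique estimate, and then matching the resulting sum (insert $m$ into a block of size at most $3$, or append $\{m\}$ as a singleton, with the size-$4$ blocks contributing nothing since $Z_{ijkl}$ is deterministic) with the partitions in $\Lambda_m$. The only differences are cosmetic (your induction runs $m-1\to m$ rather than $m\to m+1$, and your symbol $G$ for $\hat{\mathbf{h}}_mF$ collides with the group $G$), so no changes of substance are needed.
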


\begin{proof}
That $\Phi_{\mathbf{h}_1,\ldots,\mathbf{h}_m} \in L^p$ for all $p\in[1,\infty)$ follows from the
definition of $\Phi$ and Proposition \ref{p.Zp}, since $L^{\infty-}$ is closed
under products.  Given the integrability results of Propositions
\ref{p.Jhp}, \ref{p.Jhderp}, \ref{p.Zp},
and \ref{p.Zderp},
verifying the integration by
parts is now straightforward. First note that,
if $\hat{\mathbf{h}}F$ is polynomially bounded, then there exist $K,M<\infty$ such that
\begin{equation}
\label{e.est2}
\begin{split}
\sup_{|\varepsilon|\leq 1}
	\left|\frac{d}{d\varepsilon} F(\varepsilon \mathbf{h}\cdot \xi)\right|
    &= \sup_{|\varepsilon|\leq 1}\left|(\hat{\mathbf{h}}F)(\varepsilon \mathbf{h}\cdot \xi)\right|
		\\
	&\le \sup_{|\varepsilon|\leq 1} K\left(1+  
		\|\varepsilon \mathbf{h}\cdot
		\xi\|_{\mathcal{W}_T(\mathfrak{g)}}\right)^M 
    \le  C(\mathbf{h})\left(1+\norm{\xi}_{\mathcal{W}_T(\mathfrak{g)}}\right)^M,
\end{split} 
\end{equation}
where this last expression is integrable by Remark \ref{r.fernique}.

Now consider the $m=1$ case.  This is the
content of Corollary 5.6 of \cite{DriverGordina2008}, but we include it here for
completeness.  By Theorem \ref{t.qi}, we have that
\begin{align*}
\mathbb{E}\left[(\hat{\mathbf{h}}_1F)(\xi)\right]
	&= \mathbb{E}\left[\frac{d}{d\varepsilon}\bigg|_0F(\varepsilon
		\mathbf{h}_1\cdot\xi)\right] 
	= \frac{d}{d\varepsilon}\bigg|_0 \mathbb{E}\left[F(\varepsilon
		\mathbf{h}_1\cdot\xi)\right] \\
	&= \frac{d}{d\varepsilon}\bigg|_0 \mathbb{E}\left[F(\xi)
		J_{\varepsilon\mathbf{h}_1}\right] 
	= \mathbb{E}\left[F(\xi)
		\frac{d}{d\varepsilon}\bigg|_0 J_{\varepsilon\mathbf{h}_1}\right], 
\end{align*}
where the two interchanges of differentiation and integration are justified by
(\ref{e.est2}) and Proposition \ref{p.Jhderp}, respectively.  Then Lemma \ref{l.zDerivs} implies that
\[ \frac{d}{d\varepsilon}\bigg|_0 J_{\varepsilon\mathbf{h}_1} 
	= Z_{\mathbf{h}_1}
	= \Phi_{\mathbf{h}_1}, \]
completing the proof for $m=1$. 

Now, assuming the formula for general $m$, we have that
\begin{align*}
\mathbb{E}\left[(\hat{\mathbf{h}}_1\cdots\hat{\mathbf{h}}_{m+1} F)(\xi)\right]
    &= \mathbb{E}\left[(\hat{\mathbf{h}}_{m+1}F)(\xi)
        \Phi_{\mathbf{h}_1,\ldots,\mathbf{h}_m}(B,B^0)\right] \\
    &= \mathbb{E}\left[\frac{d}{d\varepsilon}\bigg|_0 F(\varepsilon
		\mathbf{\mathbf{h}}_{m+1}\cdot \xi)\Phi_{\mathbf{h}_1,\ldots,\mathbf{h}_m}(B,B^0)\right] \\
    &= \frac{d}{d\varepsilon}\bigg|_0 \mathbb{E}\left[F(\varepsilon
		\mathbf{h}_{m+1}\cdot \xi)\Phi_{\mathbf{h}_1,\ldots,\mathbf{h}_m}(B,B^0)\right] 
\end{align*}
where again we justify the interchange of differentiation and integration by
the estimate in (\ref{e.est2}) above.  Now by Theorem \ref{t.qi}
\begin{align*}
\mathbb{E}[F(\varepsilon
		\mathbf{h}_{m+1}\cdot
		\xi)&\Phi_{\mathbf{h}_1,\ldots,\mathbf{h}_m}(B,B^0)] \\
    &= \mathbb{E}\left[F(\xi)
        \Phi_{\mathbf{h}_1,\ldots,\mathbf{h}_m}(B-\varepsilon \mathbf{A}_{m+1},B^0-\varepsilon
        \mathbf{a}_{m+1}-u_{\varepsilon \mathbf{A}_{m+1}})J_{\varepsilon \mathbf{h}_{m+1}}\right] \\
    &= \mathbb{E}\left[F(\xi)
        \Phi_{\mathbf{h}_1,\ldots,\mathbf{h}_m}^{\varepsilon \mathbf{h}_{m+1}} J_{\varepsilon
		\mathbf{h}_{m+1}}\right].
\end{align*}
Since 
\begin{align*}
\frac{d}{d\varepsilon}
     \Phi_{\mathbf{h}_1,\ldots,\mathbf{h}_m}^{\varepsilon \mathbf{h}_{m+1}} J_{\varepsilon \mathbf{h}_{m+1}}  
	&= \sum_{\theta\in\Lambda_m}\sum_{j=1}^{k_\theta}
		\left(\left(\frac{d}{d\varepsilon}Z_{\gamma^\theta_j}^{\varepsilon \mathbf{h}_{m+1}}\right)		 
		\prod_{l\neq j} Z_{\gamma^\theta_l}^{\varepsilon \mathbf{h}_{m+1}}\right) J_{\varepsilon \mathbf{h}_{m+1}}
          \\
	&\quad + \left(\sum_{\theta\in\Lambda_m}
		\prod_{j=1}^{k_\theta}
		Z_{\gamma^\theta_j}^{\varepsilon \mathbf{h}_{m+1}}\right) 
		\left(\frac{d}{d\varepsilon} 
			J_{\varepsilon \mathbf{h}_{m+1}}\right),
\end{align*}
Propositions \ref{p.Jhp}, \ref{p.Jhderp}, and \ref{p.Zderp} imply that, for all
$p\in[1,\infty)$, there exists $\varepsilon_0>0$ such that
\[ \mathbb{E}\left[ \sup_{|\varepsilon|\le\varepsilon_0} \left|\frac{d}{d\varepsilon}
     \Phi_{\mathbf{h}_1,\ldots,\mathbf{h}_m}^{\varepsilon \mathbf{h}_{m+1}}
	J_{\varepsilon \mathbf{h}_{m+1}}\right|^p\right] <\infty. \]
Thus,
\[ \frac{d}{d\varepsilon}\bigg|_0
	\mathbb{E}\left[F(\xi)
        \Phi_{\mathbf{h}_1,\ldots,\mathbf{h}_m}^{\varepsilon \mathbf{h}_{m+1}} J_{\varepsilon
		\mathbf{h}_{m+1}}\right]
	   = \mathbb{E}\left[F(\xi)\frac{d}{d\varepsilon}\bigg|_0
        \Phi_{\mathbf{h}_1,\ldots,\mathbf{h}_m}^{\varepsilon \mathbf{h}_{m+1}} J_{\varepsilon
		\mathbf{h}_{m+1}}\right]. \]
By Lemma \ref{l.zDerivs},
\begin{align*}
\frac{d}{d\varepsilon}\bigg|_0 \Phi_{\mathbf{h}_1,\ldots,\mathbf{h}_m}^{\varepsilon \mathbf{h}_{m+1}}
	= \sum_{\theta\in\Lambda_m}\frac{d}{d\varepsilon}\bigg|_0
		Z_{\gamma^\theta_1}^{\varepsilon \mathbf{h}_{m+1}} \cdots
		Z_{\gamma^\theta_{k_\theta}}^{\varepsilon \mathbf{h}_{m+1}}
        =  \sum_{\theta\in\Lambda_m}
	\sum_{j=1}^{k_\theta} Z_{\gamma^\theta_j,m+1}
                    \prod_{l\neq j}Z_{\gamma^\theta_l},
  \end{align*}
where, for $\gamma = \{\ell_1,\ldots,\ell_n\}$,
\[ Z_{\gamma,m+1} 
	:= \left\{\begin{array}{ll} 
		Z_{\gamma'} \text{ for } \gamma'=\{\ell_1,\ldots,\ell_n,m+1\} &
			\text{ if } n=1,2,3 \\
		0 & \text{ if } n=4 \end{array} \right. .
\]
Thus, we have that
\begin{align*}
  \frac{d}{d\varepsilon}\bigg|_0
     \Phi_{\mathbf{h}_1,\ldots,\mathbf{h}_m}^{\varepsilon \mathbf{h}_{m+1}} J_{\varepsilon \mathbf{h}_{m+1}}  
    &=  \frac{d}{d\varepsilon}\bigg|_0\Phi_{\mathbf{h}_1,\ldots,\mathbf{h}_m}^{\varepsilon  \mathbf{h}_{m+1}}
            + \Phi_{\mathbf{h}_1,\ldots,\mathbf{h}_m} \frac{d}{d\varepsilon}\bigg|_0 
			J_{\varepsilon \mathbf{h}_{m+1}}  \\
    &= \sum_{\theta\in\Lambda_m}\sum_{\substack{j=1 \\ \#\gamma^\theta_j\leq 3}}^{k_\theta}
		\left(Z_{\gamma^\theta_j,m+1} \prod_{l\neq j} Z_{\gamma^\theta_l}\right) 
            +  \Phi_{\mathbf{h}_1,\ldots,\mathbf{h}_m}Z_{m+1},
\end{align*}
and notice that each term in this sum is a partition of $\{1,\ldots,m,m+1\}$.
In particular, one may see that the final sum is over all of $\Lambda_{m+1}$,
thus yielding the desired expression
$\Phi_{\mathbf{h}_1,\ldots,\mathbf{h}_m,\mathbf{h}_{m+1}}$.
\end{proof}

We conclude this section with the following remark, which gives the
reader some comparison between the integration by parts formula of Theorem
\ref{t.pathIBP} (and indeed the formulae to come in Theorem \ref{t.rHeatKernelIBP}
and Corollary \ref{c.lHeatKernelIBP}) and the usual ``flat'' integration by parts
for Gaussian measures.  In particular, one should think of the functions
$\Phi$ as akin to Hermite functions for the measure
$\nu$.

\begin{remark}
\label{r.hermite}
Let us recall the integration by parts formula for an abstract Wiener space
$(W,H,\mu)$ following from the standard Cameron-Martin theorem.
Let $\{e_i\}_{i=1}^\infty$ be an orthonormal basis of $H$, and
let $\partial_i$ denote the derivative in the direction $e_i$.
Then, for any $k\in\mathbb{N}$, distinct indices $i_1,\ldots,i_k$, and multi-index
$\alpha=(\alpha_1,\ldots,\alpha_k)\in\mathbb{N}^k$, we have
\[ \int_{W} (\partial_{i_1}^{\alpha_1}\cdots\partial_{i_k}^{\alpha_k} f)(w) \,d\mu(w)
	= \int_{W}  f(w) H^\alpha_{i_1,\ldots,i_k}(w) \,d\mu(w) \]
for
$H^\alpha_{i_1,\ldots,i_k}(w) := \prod_{j=1}^k H_{\alpha_j}( \langle
	e_{i_j},w\rangle_H)$,
where $H_n$ are the usual Hermite polynomials and ``$\langle e_i,w\rangle_H$'' is
the Paley-Wiener integral.  

On the other hand, Theorem \ref{t.pathIBP} implies that, for all
$\mathbf{h}_1,\ldots,\mathbf{h}_m\in\mathcal{H}_T(\mathfrak{g}_{CM})$, there
exists $\hat{\Phi}_{\mathbf{h}_1,\ldots,\mathbf{h}_m}\in  L^{\infty-}$ such that
\begin{multline*}
\int_{\mathcal{W}_T(G)} (\hat{\mathbf{h}}_1\cdots\hat{\mathbf{h}}_mF)(\omega)\,d\nu(\omega) 
	= \mathbb{E}\left[(\hat{\mathbf{h}}_1\cdots\hat{\mathbf{h}}_mF)(\xi)\right] \\
	= \mathbb{E}\left[F(\xi)\hat{\Phi}_{\mathbf{h}_1,\ldots,\mathbf{h}_m}(\xi)\right]
	= \int_{\mathcal{W}_T(G)} F(\omega)\hat{\Phi}_{\mathbf{h}_1,\ldots,\mathbf{h}_m}
		(\omega)\,d\nu(\omega).
\end{multline*}
In particular, 
$\hat\Phi_{\mathbf{h}_1,\ldots,\mathbf{h}_m}(\xi) 
	= \mathbb{E}[\Phi_{\mathbf{h}_1,\ldots,\mathbf{h}_m} |\sigma(\xi_t,
		t\in[0,T])]$ a.s., 
and comparing this with the above flat case
leads one to think of $\Phi$ as a polynomial of order $m$ in
\begin{align*} 
\langle \mathbf{h}_i,(B,B^0)\rangle_{\mathcal{H}_T(\mathfrak{g}_{CM})} 
	&:= \int_0^T
		\langle\dot{\mathbf{h}}_i(t),d(B_t,B_t^0)\rangle_{\mathfrak{g}_{CM}} \\
	&= \int_0^T \langle\dot{\mathbf{A}}_i(t),dB_t\rangle_H + \int_0^T
		\langle\dot{\mathbf{a}}_i(t),dB^0_t\rangle_\mathbf{C} 
\end{align*}
as well as additional terms like 
$\int_0^T \langle \omega(B, \dot{\mathbf{A}}_i),dB^0\rangle_\mathbf{C}$.  
The presence of these additional terms of course
follows from the non-commutativity of the setting.  That is, our formula
coincides with the flat case in the event that $\omega\equiv0$.
\end{remark}

\section{Smooth heat kernel measures on $G$}
\label{s.gp}

The smoothness results for the path space measure in the previous section now allow us to
prove smoothness results for the heat kernel measure on $G$.  
For example, in \cite{DriverGordina2008} the path space quasi-invariance was
used to show quasi-invariance for $\nu_T$ under left {\it and right}
translations by elements of the Cameron-Martin subspace; see
Theorem 6.1, Corollary 6.2, and Proposition 6.3 of that reference.

For $g\in G$, let $r_g,\ell_g:G\rightarrow G$ 
denote right and left multiplication by $g$, respectively.  As $G$
is a vector space, to each $g\in G$ we can associate the tangent
space $T_g G$ to $G$ at $g$, which is naturally isomorphic to $G$.
For $h\in\mathfrak{g}$, we define the right and left invariant vector
fields associated to $h$:
\[ \hat{h}(g):=r_{g*}h 
	= \frac{d}{d\varepsilon}\bigg|_0 \varepsilon h\cdot g \quad \text{ and }
\quad \tilde{h}(g) := \ell_{g*}h = \frac{d}{d\varepsilon}\bigg|_0
	g\cdot\varepsilon h, \quad\text{ for all } g\in G. \] 
The vector fields $\hat{h}$ and $\tilde{h}$ act on smooth functions in the
standard way; for example, for $f:G\rightarrow\mathbb{R}$ a Fr\'{e}chet smooth
function on $G$,
\[ (\hat{h}f)(g) 
	= \frac{d}{d\varepsilon}\bigg|_0 f( \varepsilon h\cdot g). \]

\begin{notation}
\label{n.Psi}
Fix $T>0$.  For $m\in\mathbb{N}$, and $h_1,\ldots,h_m\in\mathfrak{g}_{CM}$, let 
$\mathbf{h}_i(t):=\frac{t}{T}h_i\in\mathcal{H}_T(\mathfrak{g}_{CM})$ and 
define $\Psi_{h_1,\ldots,h_m} := \Phi_{\mathbf{h}_1,\ldots,\mathbf{h}_m}$,
where $\Phi$ is as in Notation \ref{n.Phi}.
\end{notation}

\begin{theorem}
\label{t.rHeatKernelIBP}
Fix $T>0$.  Let $m\in\mathbb{N}$, and $h_1,\ldots,h_m\in\mathfrak{g}_{CM}$, and suppose that
$f:G\rightarrow\mathbb{R}$ is a smooth function such that $f$ and
its right derivatives of all orders are polynomially bounded. Then
\[\mathbb{E}\left[(\hat{h}_1\cdots\hat{h}_mf)(\xi_T)\right]
	=\mathbb{E}\left[f(\xi_T)\Psi_{h_1,\ldots,h_m}\right] \]
where $\mathbb{E}|\Psi_{h_1,\ldots,h_m}|^p<\infty$ for all $p\in[1,\infty)$.
\end{theorem}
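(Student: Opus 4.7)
The strategy is to reduce Theorem \ref{t.rHeatKernelIBP} to the path-space integration by parts formula in Theorem \ref{t.pathIBP} by the natural lift of a function on $G$ to a cylinder function on $\mathcal{W}_T(G)$. Explicitly, given $f:G\to\mathbb{R}$ as in the statement, I would define $F:\mathcal{W}_T(G)\to\mathbb{R}$ by $F(\mathbf{g}) := f(\mathbf{g}(T))$, and take the particular paths $\mathbf{h}_i(t) = (t/T)h_i$ as in Notation \ref{n.Psi}, which lie in $\mathcal{H}_T(\mathfrak{g}_{CM})$ and satisfy $\mathbf{h}_i(T) = h_i$.

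The main calculation, which is essentially the chain rule, is to check that right $\mathbf{h}$-differentiation on path space passes through the time-$T$ evaluation. Since the group product on paths is pointwise, for $\mathbf{h}\in\mathcal{H}_T(\mathfrak{g}_{CM})$,
\[ (\hat{\mathbf{h}}F)(\mathbf{g})
    = \frac{d}{d\varepsilon}\bigg|_0 f\bigl((\varepsilon \mathbf{h}\cdot \mathbf{g})(T)\bigr)
    = \frac{d}{d\varepsilon}\bigg|_0 f\bigl(\varepsilon \mathbf{h}(T)\cdot \mathbf{g}(T)\bigr)
    = \bigl(\widehat{\mathbf{h}(T)}f\bigr)(\mathbf{g}(T)). \]
Iterating gives $(\hat{\mathbf{h}}_1\cdots\hat{\mathbf{h}}_m F)(\mathbf{g}) = (\widehat{\mathbf{h}_1(T)}\cdots\widehat{\mathbf{h}_m(T)}f)(\mathbf{g}(T))$, which in our case equals $(\hat{h}_1\cdots\hat{h}_m f)(\mathbf{g}(T))$.

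Next I would verify that $F$ satisfies the hypotheses of Theorem \ref{t.pathIBP}. Smoothness is immediate from smoothness of $f$ via the displayed identity above. For polynomial boundedness, note that $\|\mathbf{g}(T)\|_\mathfrak{g} \le \|\mathbf{g}\|_{\mathcal{W}_T(G)}$, so any polynomial bound $|f(g)|\le K(1+\|g\|_\mathfrak{g})^M$ on $G$ passes to $|F(\mathbf{g})|\le K(1+\|\mathbf{g}\|_{\mathcal{W}_T(G)})^M$; the same argument applied to $\hat{h}_1\cdots\hat{h}_m f$ gives polynomial boundedness of every right derivative of $F$. Applying Theorem \ref{t.pathIBP} then yields
\[ \mathbb{E}\bigl[(\hat{h}_1\cdots\hat{h}_m f)(\xi_T)\bigr]
    = \mathbb{E}\bigl[(\hat{\mathbf{h}}_1\cdots\hat{\mathbf{h}}_m F)(\xi)\bigr]
    = \mathbb{E}\bigl[F(\xi)\,\Phi_{\mathbf{h}_1,\ldots,\mathbf{h}_m}\bigr]
    = \mathbb{E}\bigl[f(\xi_T)\,\Psi_{h_1,\ldots,h_m}\bigr], \]
where the final equality is just the definition $\Psi_{h_1,\ldots,h_m} := \Phi_{\mathbf{h}_1,\ldots,\mathbf{h}_m}$. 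The integrability $\mathbb{E}|\Psi_{h_1,\ldots,h_m}|^p < \infty$ for all $p \in [1,\infty)$ is the corresponding statement for $\Phi_{\mathbf{h}_1,\ldots,\mathbf{h}_m}$ already furnished by Theorem \ref{t.pathIBP}.

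I do not expect any real obstacle here: all the analytic content, namely the integrability of the $Z$-factors, the quasi-invariance formula, and the justification of differentiating under the expectation, has already been absorbed into Theorem \ref{t.pathIBP}. The only mildly subtle point is confirming that the pointwise group action on paths genuinely implements right multiplication in $G$ at the endpoint, which is a direct consequence of the definition of the path-space group product used in Section \ref{s.ps}.
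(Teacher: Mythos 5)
Your proposal is correct and follows essentially the same route as the paper: lift $f$ to $F(\mathbf{g})=f(\mathbf{g}(T))$, take $\mathbf{h}_i(t)=(t/T)h_i$, observe that the pointwise group product makes the right derivatives commute with time-$T$ evaluation, and apply Theorem \ref{t.pathIBP}. Your explicit verification that $F$ inherits polynomial boundedness from $f$ is a detail the paper leaves implicit, but the argument is the same.
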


\begin{proof}
Clearly, the integrability of $\Phi$ proved in Theorem \ref{t.pathIBP} and the
definition of $\Psi$ imply that $\Psi_{h_1,\ldots,h_m}\in
L^p$ for all $p\in[1,\infty)$.
The integration by parts also follows from Theorem \ref{t.pathIBP}.  To see this,
let $F:\mathcal{W}_T(G)\rightarrow \mathbb{R}$ be given by
$F(\mathbf{g})=f(\mathbf{g}(T))$ 
and 
$\mathbf{h}_i(t):=\frac{t}{T}h_i\in\mathcal{H}_T(\mathfrak{g}_{CM})$.  Now note that
\begin{align*}
\mathbb{E}\left[(\hat{h}_1\cdots\hat{h}_mf)(\xi_T)\right]
	&= \mathbb{E}\left[\frac{d}{d\varepsilon_1}\bigg|_0\cdots\frac{d}{d\varepsilon_m}\bigg|_0
		f(\varepsilon_m h_m\cdot(\cdots(\varepsilon_1 h_1\cdot \xi_T))\right] \\
	&= \mathbb{E}\left[\frac{d}{d\varepsilon_1}\bigg|_0
		\cdots\frac{d}{d\varepsilon_m}\bigg|_0
		F(\varepsilon_m \mathbf{h}_m\cdot(\cdots(\varepsilon_1
		\mathbf{h}_1\cdot \xi))\right] \\
	&= \mathbb{E}\left[(\hat{\mathbf{h}}_1\cdots\hat{\mathbf{h}}_mf)(\xi_T)\right]
	= \mathbb{E}\left[F(\xi)\Phi_{\mathbf{h}_1,\ldots,\mathbf{h}_m}\right] \\
	&= \mathbb{E}\left[f(\xi_T)\Psi_{h_1,\ldots,h_m}\right].
\end{align*}
\end{proof}

\begin{remark}
\label{r.barPsi}
As in the path measure case (see Remark \ref{r.hermite}), 
Theorem \ref{t.rHeatKernelIBP} implies that, for all $h_1,\ldots,h_m\in\mathfrak{g}_{CM}$, 
there exists
$\hat{\Psi}_{h_1,\ldots,h_m}\in L^{\infty-}(\nu_T)$ such that
\begin{align*} 
\int_G (\hat{h}_1\cdots\hat{h}_mf)(g)\,d\nu_T(g)
	= \int_G f(g)\hat{\Psi}_{h_1,\ldots,h_m}(g)\,d\nu_T(g),
\end{align*}
where
$\hat{\Psi}_{h_1,\ldots,h_m}(\xi_T) 
	= \mathbb{E}[\Psi_{h_1,\ldots,h_m}\mid\sigma(\xi_T)]$ a.s.
\end{remark}

\begin{corollary}
\label{c.lHeatKernelIBP}
Under the hypotheses of Theorem \ref{t.rHeatKernelIBP},
\[ \mathbb{E}[(\tilde{h}_1\cdots\tilde{h}_mf)(\xi_T)]
	=\mathbb{E}[f(\xi_T)\tilde{\Psi}_{h_1,\ldots,h_m}(\xi_T)], \]
where
\[
\tilde{\Psi}_{h_1,\ldots,h_m}(g) 
	:= (-1)^m\hat{\Psi}_{h_1,\ldots,h_m}(g^{-1}).
\]
and $\hat{\Psi}$ is as in Remark \ref{r.barPsi}.

\end{corollary}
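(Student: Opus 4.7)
The plan is to reduce the left-invariant integration by parts to the right-invariant one via the inversion invariance of $\nu_T$ recorded in (\ref{e.inv}). Set $\check{f}(g) := f(g^{-1})$. Because $g^{-1}=-g$ on $G$ and $\|\cdot\|_{\mathfrak{g}}$ is invariant under $g\mapsto -g$, the function $\check{f}$ is smooth and polynomially bounded whenever $f$ is; moreover, using that left-invariant and right-invariant vector fields differ by a term of the form $(0,\omega(w,A))$ which grows polynomially in $\|g\|_{\mathfrak{g}}$, the polynomial bounds on all right derivatives of $f$ transfer to polynomial bounds on all right derivatives of $\check{f}$.

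The central algebraic identity I need is
\[ (\tilde{h}_1\cdots\tilde{h}_m f)(g^{-1}) = (-1)^m (\hat{h}_1\cdots\hat{h}_m \check{f})(g). \]
For $m=1$, the computation is direct: writing $y=g^{-1}\cdot\varepsilon h$ and using $y^{-1}=-\varepsilon h\cdot g$ together with $f(y)=\check{f}(y^{-1})$ gives $(\tilde{h}f)(g^{-1})=\frac{d}{d\varepsilon}\big|_0 \check{f}(-\varepsilon h\cdot g)=-(\hat{h}\check{f})(g)$. The general case follows by induction: applying the base case with $F=\tilde{h}_2\cdots\tilde{h}_m f$ in place of $f$, and then using the inductive hypothesis to rewrite $\check{F}(x)=(-1)^{m-1}(\hat{h}_2\cdots\hat{h}_m\check{f})(x)$, yields the claim with the correct sign and preserved ordering.

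Given this identity, the corollary falls out in a few lines. First, applying (\ref{e.inv}) to the function $g\mapsto(\tilde{h}_1\cdots\tilde{h}_m f)(g)$ gives
\[ \mathbb{E}[(\tilde{h}_1\cdots\tilde{h}_m f)(\xi_T)] = \mathbb{E}[(\tilde{h}_1\cdots\tilde{h}_m f)(\xi_T^{-1})] = (-1)^m\mathbb{E}[(\hat{h}_1\cdots\hat{h}_m\check{f})(\xi_T)]. \]
Next, since $\check{f}$ satisfies the hypotheses of Theorem \ref{t.rHeatKernelIBP}, that theorem (combined with Remark \ref{r.barPsi}) gives
\[ (-1)^m\mathbb{E}[(\hat{h}_1\cdots\hat{h}_m\check{f})(\xi_T)] = (-1)^m\mathbb{E}[\check{f}(\xi_T)\hat{\Psi}_{h_1,\ldots,h_m}(\xi_T)]. \]
Finally, applying (\ref{e.inv}) a second time to the $\sigma(\xi_T)$-measurable product $\check{f}(g)\hat{\Psi}_{h_1,\ldots,h_m}(g)$ converts $\check{f}(\xi_T)=f(\xi_T^{-1})$ back to $f(\xi_T)$ at the cost of replacing $\hat{\Psi}_{h_1,\ldots,h_m}(\xi_T)$ by $\hat{\Psi}_{h_1,\ldots,h_m}(\xi_T^{-1})$, producing exactly $(-1)^m\hat{\Psi}_{h_1,\ldots,h_m}(g^{-1})=\tilde{\Psi}_{h_1,\ldots,h_m}(g)$ evaluated at $\xi_T$.

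The only nontrivial step is the algebraic identity relating iterated left-invariant and right-invariant derivatives under inversion; once one is careful to track how $(ab)^{-1}=b^{-1}a^{-1}$ interacts with the chain rule, the induction is routine. Verifying that the hypotheses of Theorem \ref{t.rHeatKernelIBP} genuinely pass from $f$ to $\check{f}$ is the one ingredient worth double-checking, since polynomial bounds on right derivatives of $\check{f}$ correspond (via the identity above) to polynomial bounds on left derivatives of $f$, but the explicit formula $\tilde{h}-\hat{h}=(0,\omega(\cdot,A))$ on $\mathfrak{g}$-tangent vectors makes this transfer transparent.
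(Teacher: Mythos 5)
Your proof is correct, and while it uses the same three ingredients as the paper --- the single-derivative identity $(\tilde{h}f)(g)=-(\hat{h}u)(g^{-1})$ from (\ref{e.a}), the inversion invariance (\ref{e.inv}), and Theorem \ref{t.rHeatKernelIBP} --- it organizes them differently. The paper runs a single induction on the expectation identity itself: at stage $m+1$ it invokes the inductive hypothesis for the outer $m$ left derivatives, converts the innermost one via (\ref{e.a}), and then absorbs $\hat{h}_{m+1}$ into $\Psi_{h_1,\ldots,h_{m+1}}$ using the identity $\mathbb{E}[(\hat{h}_{m+1}u)(\xi_T)\Psi_{h_1,\ldots,h_m}]=\mathbb{E}[u(\xi_T)\Psi_{h_1,\ldots,h_{m+1}}]$, which is really the inductive step from the \emph{proof} of Theorem \ref{t.pathIBP} rather than the statement of Theorem \ref{t.rHeatKernelIBP}. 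You instead isolate the purely pointwise identity $(\tilde{h}_1\cdots\tilde{h}_m f)(g^{-1})=(-1)^m(\hat{h}_1\cdots\hat{h}_m\check{f})(g)$, prove it by an algebraic induction, and then apply (\ref{e.inv}), Theorem \ref{t.rHeatKernelIBP} (as a black box, together with Remark \ref{r.barPsi}), and (\ref{e.inv}) again, each exactly once. Your version buys a cleaner separation of the algebra from the probability and avoids re-opening the proof of the path-space theorem; the paper's version keeps everything at the level of expectations and so never needs to discuss pointwise bounds on iterated left derivatives.

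One point in your favor worth noting: both proofs must apply the right-invariant integration by parts to $\check{f}(g)=f(g^{-1})$, and hence both implicitly need the hypotheses of Theorem \ref{t.rHeatKernelIBP} to transfer from $f$ to $\check{f}$. The paper passes over this silently; you address it via $\tilde{h}(g)-\hat{h}(g)=(0,\omega(w,A))$ for $g=(w,c)$ and $h=(A,a)$. For the iterated derivatives one should also observe that the correction direction $(0,\omega(w,A))$ is central and depends only linearly on $w$, so repeated differentiation produces only polynomially bounded coefficients and central derivatives of $f$ (for which left and right actions coincide); with that observation your transfer argument closes completely. This is a refinement of the paper's proof, not a gap relative to it.
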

\begin{proof}
Take $u(g):=f(g^{-1})=f(-g)$.  We proceed by induction.  The $m=1$ case is
proved in Corollary 6.5 of
\cite{DriverGordina2008}, but we include the proof here for completeness.  Note
first that, for any $g\in G$ and $h\in \mathfrak{g}_{CM}$,
\begin{equation}
\label{e.a} 
(\tilde{h}f)(g) = \frac{d}{d\varepsilon}\bigg|_0 f(g\cdot\varepsilon h)
	= \frac{d}{d\varepsilon}\bigg|_0 u(-\varepsilon h\cdot g^{-1})
	= -(\hat{h}u)(g^{-1}). 
\end{equation}
Thus, making repeated use of equation (\ref{e.inv}), we have that
\begin{align*}
\mathbb{E}[(\tilde{h}f)(\xi_T)] 
	&= -\mathbb{E}[(\hat{h} u)(\xi_T^{-1})] 
	= -\mathbb{E}[(\hat{h} u)(\xi_T)] \\
	&= -\mathbb{E}[u(\xi_T)\hat{\Psi}_h(\xi_T)]
	= -\mathbb{E}[f(\xi_T^{-1})\hat{\Psi}_h(\xi_T)] \\
	&= -\mathbb{E}[f(\xi_T)\hat{\Psi}_h(\xi_T^{-1})],
\end{align*}
where we have applied Theorem \ref{t.rHeatKernelIBP} in the third equality.
Now assuming the formula for $m$ and again using equations (\ref{e.a}) and
(\ref{e.inv}) and Theorem \ref{t.rHeatKernelIBP} gives
\begin{align*}
\mathbb{E}&\left[(\tilde{h}_1\cdots\tilde{h}_{m+1}f)(\xi_T)\right]
	= (-1)^m\mathbb{E}\left[(\tilde{h}_{m+1}f)(\xi_T)
		\hat{\Psi}_{h_1,\ldots,h_m}(\xi_T^{-1})\right] \\
	&= (-1)^{m+1}\mathbb{E}\left[ (\hat{h}_{m+1}u)(\xi_T^{-1})
		\hat{\Psi}_{h_1,\ldots,h_m}(\xi_T^{-1})\right] \\
	&= (-1)^{m+1}\mathbb{E}\left[(\hat{h}_{m+1}u)(\xi_T)
		\hat{\Psi}_{h_1,\ldots,h_m}(\xi_T)\right] 
	= (-1)^{m+1}\mathbb{E}\left[ (\hat{h}_{m+1}u)(\xi_T)
		\Psi_{h_1,\ldots,h_m}\right]  \\
	&= (-1)^{m+1}\mathbb{E}\left[ u(\xi_T) \Psi_{h_1,\ldots,h_{m+1}}\right] 
	= (-1)^{m+1}\mathbb{E}\left[u(\xi_T)
		\hat{\Psi}_{h_1,\ldots,h_{m+1}}(\xi_T)\right] \\
	&= (-1)^{m+1}\mathbb{E}\left[f(\xi_T^{-1})
		\hat{\Psi}_{h_1,\ldots,h_{m+1}}(\xi_T)\right] 
	= (-1)^{m+1}\mathbb{E}\left[f(\xi_T)
		\hat{\Psi}_{h_1,\ldots,h_{m+1}}(\xi_T^{-1})\right].
\end{align*}
\end{proof}

\providecommand{\bysame}{\leavevmode\hbox to3em{\hrulefill}\thinspace}
\providecommand{\MR}{\relax\ifhmode\unskip\space\fi MR }
\providecommand{\MRhref}[2]{%
  \href{http://www.ams.org/mathscinet-getitem?mr=#1}{#2}
}
\providecommand{\href}[2]{#2}

\end{document}